%
%
%

\documentclass[graybox]{svmult}


\usepackage{mathptmx}       
\usepackage{helvet}         
\usepackage{courier}        
\usepackage{type1cm}        
%
\usepackage{makeidx}         
\usepackage{graphicx}        
\usepackage{multicol}        
\usepackage[bottom]{footmisc}
\usepackage[english]{babel}
\usepackage[latin1]{inputenc}
\usepackage{latexsym}
\usepackage{epsfig}
\usepackage{graphicx}
\usepackage{color}
\usepackage{amsfonts}
\usepackage{bbm}
\usepackage{amsmath}
\usepackage{amssymb}
\usepackage{epstopdf}
\usepackage{subfigure}
\usepackage{enumerate}
\RequirePackage{xspace}
\usepackage[ruled, section]{algorithm}
\usepackage{algpseudocode}
\RequirePackage[colorlinks,citecolor=blue,urlcolor=blue]{hyperref}

\DeclareGraphicsRule{.tif}{png}{.png}{`convert #1 `dirname #1`/`basename #1 .tif`.png}

\renewcommand{\E}{\mathbb E}
\newcommand{ \R}{ \mathbb R}
\renewcommand{\Pr}{ \mathrm P}
\renewcommand{\D}{ \mathbb D}

\newcommand{ \cb}{ \mathcal B}

\newcommand{ \cV}{ \mathcal V}

\newcommand{\ga}{\alpha}
\newcommand{\gb}{\beta}
\newcommand{\gd}{\delta}
\newcommand{\gre}{\epsilon}
\newcommand{\gve}{\varepsilon}

\newcommand{\gl}{\lambda}
\newcommand{\gs}{\sigma}
\newcommand{\om}{\omega}

\newcommand{\gm}{\gamma}
\newcommand{\G}{\Gamma}

\newcommand{\bh}{\mathbf{H}}

\newcommand{\bw}{\mathbf{W}}
\newcommand{\bld}{\mathbf{D}}
\newcommand{\br}{\mathbf{R}}
\newcommand{\bc}{\mathbf{C}}

\newcommand{\rar}{ \rightarrow}

\newcommand{\V}[1]{\ensuremath{\boldsymbol{#1}}\xspace}



\makeindex             


\begin{document}

\title*{Spectral Clustering and Block Models: A Review And A New Algorithm}
\author{Sharmodeep Bhattacharyya and Peter J. Bickel}
\institute{Sharmodeep Bhattacharyya \at Oregon State University, Department of Statistics, 44 Kidder Hall, Corvallis, OR, \email{bhattash@science.oregonstate.edu}
\and Peter J. Bickel \at University of California at Berkeley, Department of Statistics, 367 Evans Hall, Berkeley, CA \email{bickel@stat.berkeley.edu}}
%
%
\maketitle

\abstract{We focus on spectral clustering of unlabeled graphs and review some results on clustering methods which achieve weak or strong consistent identification in data generated by such models. We also present a new algorithm which appears to perform optimally both theoretically using asymptotic theory and empirically.}

\section{Introduction}
\label{intro}
%

Since its introduction in \cite{MR0318007}, spectral analysis of various matrices associated to groups has become one of the most widely used clustering techniques in statistics and machine learning.

In the context of unlabeled graphs, a number of methods, all of which come under the broad heading of spectral clustering have been proposed. These methods based on spectral analysis of adjacency matrices or some derived matrix such as one of the Laplacians (\cite{shi2000normalized}, \cite{ng2002spectral}, \cite{MR2396807}, \cite{MR2893856}, \cite{MR3010899}) have been studied in connection with their effectiveness in identifying members of blocks in exchangeable graph block models. In this paper after introducing the methods and models, we intend to review some of the literature. We relate it to the results of Mossel, Neeman and Sly (2012) \cite{mossel2012stochastic} and Massouli\'{e} (2014) \cite{massoulie2014community}, where it is shown that for very sparse models, there exists a phase transition below which members cannot be identified better than chance and also showed that above the phase transition one can do better using rather subtle methods. In \cite{bhattacharyya2014community} we develop a spectral clustering method based on the matrix of geodesic distances between nodes which can achieve the goals of the work we cited and in fact behaves well for all unlabeled networks, sparse, semi-sparse and dense. We give a statement and sketch the proof of these claims in \cite{} but give a full argument for the sparse case considered by the above authors only in this paper. We give the necessary preliminaries in Section 2, more history in Section 3 and show the theoretical properties of the method in Section 4.

\section{Preliminaries}
\label{sec_prel}
There are  many standard methods of clustering based on numerical similarity matrices which are discussed  in a number of monographs (Eg:Hartigan \cite{MR0405726}, Leroy and Rousseuw \cite{MR914792}). We shall not discuss these further. Our focus is on unlabeled graphs of $n$ vertices characterized by adjacency matrices, $A=||a_{ij}||$ for $n$ data points. With $a_{ij} = 1$ if there is an edge between $i$ and $j$ and $a_{ij}=0$ otherwise. The natural assumption then is, $A=A^T$. Our basic goal is to divide the points in $K$ sets  such that on some average  criterion the points in a given subset are more similar to each other than to those of other subsets. Our focus is on methods of clustering based on the spectrum (eigenvalues and eigenvectors) of $A$ or related matrices. 
 

\subsection{Notation and Formal Definition of Stochastic Block Model}
\label{sec_sbm}
\begin{definition}
\label{def_sbm}
A graph $G^K(B, (P,\V{\pi}))$ generated from the \textbf{stochastic block model} (SBM) with $K$ blocks and parameters $P\in(0,1)^{K\times K}$ and $\V{\pi}\in (0,1)^K$ can be defined in following way - each vertex of graph $G_n$ is assigned to a community $\V{c} \in \{1, \ldots,K\}$. The $(c_1, \ldots, c_n)$ are independent outcomes of multinomial draws with parameter $\V{\pi} = (\pi_1, \ldots, \pi_K)$, where $\pi_i > 0$ for all $i$. Conditional on the label vector $\V{c} \equiv (c_1, \ldots, c_n)$, the edge variables $A_{ij}$ for $i < j$ are independent Bernoulli variables with
\begin{align}
\label{eq_sbm_adj}
\E[A_{ij}|\V{c}] = P_{c_ic_j} = \min\{\rho_nB_{c_ic_j}, 1\} ,
\end{align}
where $P = [P_{ab}]$ and $B = [B_{ab}]$ are $K \times K$ symmetric matrices. We call $P$ the \textbf{connection probability} matrix and $B$ the \textbf{kernel} matrix for the connection. So, we have $P_{ab} \leq 1$ for all $a, b = 1, \ldots, K$, $P\mathbf{1} \leq \mathbf{1}$ and $\mathbf{1}^TP\leq \mathbf{1}$ element-wise.
\end{definition}
By definition $A_{ji} = A_{ij}$, and $A_{ii} = 0$ (no self-loops). 

This formulation is a reparametrization due to Bickel and Chen (2009) \cite{bickel2009nonparametric} of the definition of Holland and Leinhardt \cite{holland1983stochastic}. It permits separate consideration asymptotically of the density of the graph and its structure as follows:
\begin{align*}
\Pr\left(\mbox{Vertex 1 belongs to block } a \mbox{ and vertex  2 to block }b\mbox{ and are connected}\right)=\pi_a\pi_b P_{ab}
\end{align*}
with $P_{ab}$  depending on n. $P_{ab}=\rho_n\min(B_{ab},1/\rho_n)$. We can interpret $\rho_n$  as the unconditional  probability of an edge and $B_{ab}$ essentially as  
\begin{align*}
\Pr\left(\mbox{Vertex 1 belongs to }a\mbox{ and vertex 2 belongs to }b | \mbox{ an edge between 1 and 2}\right).
\end{align*}

Set $\Pi = \mbox{diag}(\pi_1, \ldots, \pi_K)$. 
\begin{enumerate}
\item Define the matrices as $M = \Pi B$ and $S = \Pi^{1/2}B\Pi^{1/2}$.
\item Note that the eigenvalues of $M$ are the same as the symmetric matrix $S$ and in particular are real-valued. 
\item The eigenvalues of the expected adjacency matrix $\bar{A} \equiv \E(A)$ are also the same as those of $S$ but with multiplicities. We denote the eigenvalues by their absolute order, $\gl_1 \geq |\gl_2| \geq \cdots \geq |\gl_K|$. 
\end{enumerate}
Let us denote $(\varphi_1, \ldots, \varphi_K)$, $\varphi_i \in \R^K$, as the eigenvectors of $S$ corresponding to the eigenvalues $\gl_1, \ldots, \gl_K$. If a set of $\gl_j$'s are equal to $\gl$, we choose eigenvectors from the eigenspace corresponding to the $\gl$ as appropriate. Then, we have, $\phi_i = \Pi^{-1/2}\varphi_i$ and $\psi_i = \Pi^{1/2}\varphi_i$ as the left and right eigenvectors of $M$. Also, $\langle\phi_i, \phi_j\rangle_{\pi} = \sum_{k=1}^K \pi_k\phi_{ik}\phi_{jk}= \gd_{ij}$. The spectral decomposition of $M$, $S$ and $B$ are
\begin{align*}
B = \sum_{k=1}^K \gl_k\phi_k\phi_k^T,\hspace{0.2in} S = \sum_{k=1}^K \gl_k\varphi_k\varphi_k^T, \hspace{0.2in} M = \sum_{k=1}^K \gl_k\psi_k\phi_k^T.
\end{align*}

\subsection{Spectral Clustering}
\label{sec_specc}
The basic goal of community detection is to infer the node labels $\V{c}$ from the data. Although we do not explicitly consider parameter estimation, they can be recovered from $\hat{\V{c}}$, an estimate of $(c_1, \ldots, c_n)$ by
\begin{align}
\label{eq_meanlink}
\hat{P}_{ab}  \equiv  \frac{1}{O_{ab}}\sum_{i=1}^n\sum_{j=1}^n A_{ij}\mathbf{1}\left(\hat{\V{c}}_i = a, \hat{\V{c}}_j = b\right),\ \ \ 1\leq a,b\leq K,
\end{align}
where, 
\begin{eqnarray*}
O_{ab}  \equiv & \left\{
	\begin{array}{ll}
	n_an_b, & \ \ 1\leq a,b\leq K, a\neq b \\
	n_a(n_a-1), & \ \ 1\leq a\leq K, a=b
\end{array}
\right. , &  n_{a}  \equiv  \sum_{i=1}^n \mathbf{1}\left(\hat{\V{c}}_i = a\right),\ 1\leq a\leq K 
\end{eqnarray*}




There are a number of approaches for community detection based on modularities (\cite{girvan2002community}, \cite{bickel2009nonparametric}), maximum likelihood and variational likelihood (\cite{MR2988467}, \cite{MR3127853}) and approximations such as  semidefinite programming approaches \cite{amini2014semidefinite}, pseudolikelihood \cite{MR3127859} but these all tend to be computationally intensive  and/or require good initial assignments of blocks. The methods which have proved both computationally effective and asymptotically correct in a sense we shall discuss are related to spectral analysis of the adjacency or related matrices.They differ in important details.  

Given an $n\times n$  symmetric matrix $M$ based on $A$, the algorithms are of the form:
\begin{enumerate}
\item Using the spectral decomposition of $M$ or a related generalized eigenproblem.
\item Obtain an  $n\times K$ matrix of $K$ $n\times 1$ vectors. 
\item Apply $K$ means clustering to  the $n$  $K$-dimensional  row  vectors  of the matrix  of Step 2.
\item Identify  the indices of the rows  belonging to cluster $j$ ,$j=1,\ldots,K$  with vertices belonging to block $j$.
\end{enumerate}

In addition to $A$, three graph Laplacian matrices discussed by von Luxburg (2007) \cite{von2007tutorial}, have been considered  extensively, as well as some others we shall mention briefly below and the matrix we shall  show has optimal asymptotic properties and discuss in greater detail. The matrices popularly considered are:
\begin{itemize}
\item $L=D-A$:  the graph Laplacian.
\item $L_{\mbox{rw}} = D^{-1}A$: the random walk Laplacian.
\item $L_{\mbox{sym}} =D^{-1/2}AD^{-1/2}$: the symmetric Laplacian. 
\end{itemize}

Here $D =\mbox{diag}(A\mathbf{1})$, the diagonal matrix whose diagonal is  the vector  of row sums of $A$.
She considers  optimization problems which are relaxed versions of  combinatorial problems which implicitly define clusters as sets of nodes with more internal than external edges. $L$ and $L_{\mbox{sym}}$ appear in two of these relaxations. 

The form of step 2 differs for $L$ and $L{\mbox{sym}}$  with  the $K$ vectors of the $L$ problem corresponding to the top $K$ eigenvalues of the generalized eigenvalue problem $Lv=\gl Dv$ ,while the $n$ $K$-dimensional vectors of the $L_{\mbox{sym}}$ problem are obtained by normalizing the rows of the matrix of $K$ eigenvectors corresponding to the  top $K$ eigenvalues  of $L_{\mbox{sym}}$. Their relation to the $K$ block model is through asymptotics. 

Why is spectral clustering expected to work? Given $A$ generated by a $K$-block model, let $\V{c} \leftrightarrow (n_1, \ldots, n_K)$ where, $n_a$ is the number of vertices assigned to type $a$. Then we can write,
\begin{align*}
\E(A|\V{c}) = PQP^T
\end{align*}
where, $P$ is a permutation matrix and $Q_{n\times n}$ has succesive blocks of $n_1$ rows, $n_2$ rows and so on with all the vectors in each row the same. Thus $\mbox{rank}(\E(A|\V{c}) = K$. The same is true of the asymptotic limit of $L$ given $\V{c}$.

If asymptotics as $n\rar \infty$ justify concentration of $A$ or $L$ around their expectations then we expect all eigenvalues other than the largest $K$ in absolute value are small. It follows that the $n$ rows of the $K$ eigenvectors associated with the top $K$ eigenvalues should be resolvable into $K$ clusters in $\R^K$ with cluster members identified with rows of $A_{n\times n}$, see \cite{MR2893856}, \cite{MR3010899} for proofs.

\subsection{Asymptotics}
\label{sec_asymp}
Now we can consider several  asymptotic regimes  as $n\rar \infty$. Let $\gl_n=n\rho_n$  be the average degree of the graph.
\begin{itemize}
\item[(I)] \hspace{0.05in} The \emph{dense} regime:  $\lambda_n = \Omega(n)$.
\item[(II)] \hspace{0.05in} The \emph{semi dense} regime: $\lambda_n/log(n) \rar \infty$.
\item[(III)]  \hspace{0.05in} The \emph{semi sparse} regime: Not semidense but $\lambda_n\rar \infty$.
\item[(IV)] \hspace{0.05in} The \emph{sparse} regime: $\lambda_n = O(1)$.
\end{itemize}

Here are some results in the different regimes. We define a method of vertex assignment  to communities  as a random map $\delta:\{1,\ldots,n\} \rar \{1, \ldots,K\}$ where randomness comes through the dependence of delta on $A$ as a function. Thus spectral clustering using  the various matrices  which depend on $A$ is such a $\delta$.

\begin{definition}
$\delta$ is said to be \emph{strongly consistent} if 
\begin{align*}
\Pr(i\mbox{ belongs to }a\mbox{ and }\delta(i)=a\mbox{ for all }i , a)\rar 1\mbox{ as } n\rar \infty.
\end{align*}
\end{definition}
Note that  the blocks are only determined up to permutation.

Bickel and Chen (2009) \cite{bickel2009nonparametric} show that  in the (semi) dense regime a method called profile likelihood is strongly consistent  under minimal identifiability conditions and later this result was extended \cite{MR3127853} to fitting by maximum likelihood or variational likelihood. In fact, in the (semi) dense regime, the block model likelihood asymptotically agrees with the joint likelihood of  $A$ and vertex block identities so that efficient estimation of all parameters is possible. It is easy to see that the result cannot  hold in the (semi)sparse regime since  isolated points then exist with probability 1.

Unfortunately all of these methods are computationally  intensive. Although spectral clustering is not strongly consistent, a slight variant, reassigning vertices in any cluster $a$ which are maximally connected to another cluster $b$ rather than $a$ , is strongly consistent.
\begin{definition}
$\delta$ is said to be \emph{weakly consistent} if and only if 
\begin{align*}
W \equiv n^{-1}\sum_{i=1}^n\Pr\left(i \in a,\delta(i)\neq a | \forall i,a\right)=o(1)
\end{align*}
\end{definition}

Spectral clustering applied  to  $A$ \cite{MR3010899} or the Laplacians (\cite{MR2893856} in the manner we have described) has been shown to be weakly consistent in the semi dense to dense regimes. Even weak consistency fails for parts of the sparse regime \cite{abbe2014exact}. The best that can be hoped for is $W < \frac{1}{2}$. A sharp problem has been posed and eventually resolved  in a series of papers, Decelle et al \cite{decelle2011asymptotic}, Mossel et al \cite{mossel2013proof}. These writers considered the  case $K=2, \pi_1=\pi_2, B_{11}=B_{22}$. First, Decelle  et al. \cite{decelle2011asymptotic} argued on physical grounds that if, $F=2(B_{11}-B_{12})^2/(B_{11}+B_{12}) \leq 1$, then  $W \geq 1/2$  for any method  and parameters are unestimable from the data even if they satisfy the minimal identifiability conditions  given below. On the other hand  Mossel et al \cite{mossel2013proof} and independently  Massoulie et al \cite{massoulie2014community}, devised admittedly slow methods such that if $F>1$ then $W<1/2$ and parameters can be estimated consistently. 

We now present a fast spectral clustering method given in greater detail in \cite{bhattacharyya2014community} which yields weak consistency  for the semisparse regime on and also has the properties of  the Mossel et al and Massoulie methods. In fact, it reaches the phase transition threshold  for all K not just K=2, but still restricted to $\pi_j=1/K$, all $j$ and $B_{aa}+2\sum \left[B_{ab}: b \neq a\right]$  independent of $a$ for all $a$. 

We note that Zhao et. al. (2015) \cite{gao2015achieving} exhibit a two-stage algorithm which exhibits the same behavior but its properties in sparse case are unknown. The algorithm given in the next section involves spectral clustering of a new matrix, that of all geodesic distances between $i$ and $j$. 

\section{Algorithm}
\label{sec_prel}
As usual let $G_n$, an undirected graph on $n$ vertices be the data. denote the vertex set by $V(G_n) \equiv \{v_1, \ldots, v_n\}$ and the edge set by $E(G_n) \equiv \{e_1, \ldots, e_m\}$ with cardinalities $|V(G_n)| = n$ and $E(G_n)| = m$.




As usual a path between vertices $u$ and $v$ is a set of edges $\{(u, v_1), (v_1, v_2), \ldots, (v_{\ell-1}, v)\}$ and the length of such a path is $\ell$.

The algorithm we propose depends on the graph distance or geodesic distance between vertices in a graph.
\begin{definition}
\label{def_geo_dis}
The \textbf{Graph} or \textbf{Geodesic distance} between two vertices $i$ and $j$ of graph $G$ is given by the length of the shortest path between the vertices $i$ and $j$, if they are connected. Otherwise, the distance is infinite.
\end{definition}
So, for any two vertices $u, v\in V(G)$, graph distance, $d_g$ is defined by
\begin{eqnarray*}
\label{eq_geo_dis}
d_g(u, v) &=&\left\{
\begin{array}{l}
    \min\{\ell |\exists \mbox{ path of length } \ell \mbox{ between } u \mbox { and } v\},   \\
    \infty, \mbox{ if } u \mbox{ and } v \mbox{ are not connected} 
     \end{array}
     \right.
\end{eqnarray*}
For implementation, we can replace $\infty$ by $n+1$, when, $u$ and $v$ are not connected, since any path with loops can not be a geodesic. The main steps of the algorithm are as follows
\begin{enumerate}
\item[1.] Find the graph distance matrix $D = [d_g(v_i, v_j)]_{i, j=1}^n$ for a given network but with distance upper bounded by $k\log n$. Assign non-connected vertices an arbitrary high value.
\item[2.] Perform hierarchical clustering to identify the giant component $G^C$ of graph $G$. Let $n_C = |V(G^C)|$.
\item[3.] Normalize the graph distance matrix on $G^C$, $D^C$ by 
\begin{align*}
\bar{D}^C = -\left(I-\frac{1}{n_C}\mathbf{1}\mathbf{1}^T\right)(D^C)^2\left(I-\frac{1}{n_C}\mathbf{1}\mathbf{1}^T\right) 
\end{align*}
\item[4.] Perform eigenvalue decomposition on $\bar{D}^C$.
\item[5.] Consider the top $K$ eigenvectors of normalized distance matrix $\bar{D}^C$ and $\tilde{\bw}$ be the $n\times K$ matrix formed by arranging the $K$ eigenvectors as columns in $\tilde{\bw}$. Perform $K$-means clustering on the rows $\tilde{\bw}$, that means, find an $n\times K$ matrix $\bc$, which has $K$ distinct rows and minimizes $||\bc - \tilde{\bw}||_F$.
\item[6.] (Alternative to 5.) Perform Gaussian mixture model based clustering on the rows of $\tilde{\bw}$, when there is an indication of highly-varying average degree between the communities.
\item[7.] Let $\hat{\V{c}} : V \mapsto [K]$ be the block assignment function according to the clustering of the rows of $\tilde{\bw}$ performed in either Step 5 or 6.
\end{enumerate}
Here are some important observations about the implementation of the algorithm -
\begin{itemize}
\item[(a)] \hspace{0.02in} There are standard algorithms for graph distance finding in the algorithmic graph theory literature. In the algorithmic graph theory literature the problem is known as the \textbf{all pairs shortest path} problem. The two most popular algorithms are Floyd-Warshall \cite{floyd1962algorithm} \cite{warshall1962theorem} and Johnson's algorithm \cite{johnson1977efficient}. 
\item[(b)] \hspace{0.02in} Step 3 of the algorithm is nothing but the classical multi-dimensional scaling (MDS) of the graph distance matrix. 
\item[(c)] \hspace{0.02in} In the Step 5 of the algorithm $K$-means clustering is appropriate if the expected degree of the blocks are equal. However, if the expected degree of the blocks are different, this leads to multi scale behavior in the eigenvectors of the normalized distance matrix and bad behavior in practice. So, we perform Gaussian Mixture Model (GMM) based clustering instead of $K$-means to take into account that.
\end{itemize}

General theoretical results on the algorithm will be given in \cite{bhattacharyya2014community}. In this paper, we first restrict to the sparse regime 
We do so because the arguments in the sparse regime are essentially different from the others. Curiously, it is in the sparse and part of the semi-sparse regime only that the matrix $\bar{D}^C$ concentrates to an $n \times n$ matrix with $K$ distinct types of row vectors as for the other methods of spectral clustering. It does not concentrate in the dense regime, while the opposite is true of $A$ and $L$. They do not concentrate outside the semidense regime. That the geodesic matrix does not concentrate in the dense regime can easily be seen since asymptotically all geodesic paths are of constant length. But the distributions of path lengths differs from block to block ensuring that the spectral clustering works. But we do not touch this further here.


\section{Theoretical Results}
\label{sec_geo_theory}

Throughout this section we take $\rho_n = \frac{1}{n}$ and specialize to the case 
\begin{align*}
B = (p - q)\mbox{I}_{K\times K} + q\mathbf{1}\mathbf{1}^T
\end{align*}
where, $\mbox{I}$ is the identity and $\mathbf{1} = (1, \ldots, 1)^T$. That is, all $K$ blocks have the same probability $p$ of connecting two block members and probability $q$ of connecting members of two different blocks and $p > q$. We also assume that $\pi_a = \frac{1}{K}$, $a = 1, \ldots, K$, all blocks are asymptotically of the same size. 
We restrict ourselves to this model here because it is the one treated by Mossel, Neeman and Sly (2013) \cite{mossel2013proof} and already subtle technical details are not obscured. Here is the result we prove.
\begin{theorem}
\label{thm_mis}
For the given model, if
\begin{align}
\label{eq_main_cond}
(p-q)^2 > K(p+(K-1)q),
\end{align}
and our algorithm is applied, $\hat{\V{c}}$ results and $\V{c}$ is the true assignment function, then, 
\begin{align}
\label{eq_miscl}
\left[\frac{1}{n}\sum_{i=1}^n\mathbf{1}\left(\V{c}(v_i) \neq \hat{\V{c}}(v_i)\right) < \frac{1}{2}\right] \rar 1
\end{align}
\end{theorem}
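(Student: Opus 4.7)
The strategy is to reduce the spectral analysis of the normalized geodesic-distance matrix $\bar{D}^C$ to a calculation on a multi-type Galton--Watson (MGW) tree, exploit the Kesten--Stigum (KS) phase transition $\lambda_2(M)^2=\lambda_1(M)$, and then transfer the resulting structure back to the graph via a Davis--Kahan eigenvector bound.  In the regime $\rho_n=1/n$, the $r$-neighborhood of any vertex $v$ with $r=c\log n$ ($c$ small enough) couples in total variation with an MGW tree rooted at $v$ having Poisson offspring of mean matrix $M=\Pi B=B/K$.  Under the symmetry of the theorem, $\lambda_1=(p+(K-1)q)/K$ is simple with eigenvector $\mathbf{1}/\sqrt{K}$, while $\lambda_2=(p-q)/K$ has multiplicity $K-1$, so \eqref{eq_main_cond} is precisely $\lambda_2^2>\lambda_1$; in particular it forces $\lambda_1>1$ and hence a giant component of linear size.

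\textbf{Expansion of $d_g$ and the structured expectation.}  Write $r^{*}=\tfrac{1}{2}\log_{\lambda_1}n$ and let $B_r(v)$ be the ball of radius $r$ around $v$.  Standard MGW asymptotics give $|B_r(v)|=W_v\lambda_1^r(1+o(1))$ a.s.\ on non-extinction, and the multi-type KS limit theorem gives, for $k=2,\ldots,K$,
\begin{align*}
\sum_{u\in B_r(v)}\varphi_k(c_u)=\lambda_2^{\,r}\,W_v^{(k)}(1+o(1)),
\end{align*}
with $W_v^{(k)}$ non-degenerate \emph{precisely} when $\lambda_2^2>\lambda_1$.  Using that $d_g(v_i,v_j)\leq 2r$ iff $B_r(v_i)\cap B_r(v_j)\neq\emptyset$, together with a Poissonized estimate for the meeting of the two exploration fronts, yields the expansion
\begin{align*}
d_g(v_i,v_j)=2r^{*}-\log_{\lambda_1}\!\bigl(W_{v_i}W_{v_j}\bigr)+Y(c_{v_i},c_{v_j})+o(1),
\end{align*}
where $Y(a,b)$ is built from the $\{W_v^{(k)}\}_{k\geq 2}$ and inherits the block structure of $B$, so that in the symmetric model $Y(a,b)=y_s\mathbf{1}(a=b)+y_d\mathbf{1}(a\neq b)$.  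Squaring this expansion and applying the double-centering of Step 3 kills both the $4(r^{*})^2$ constant and the separable $W$-term (which is of the form $u_i+u_j$).  What survives is $4r^{*}\bigl[HYH\bigr]_{ij}$ plus lower-order noise, so that conditional on $\V c$ the dominant part of $\bar{D}^C$ is a rank-$(K-1)$ matrix whose nonzero eigenspace is spanned by the centered block indicators $H\mathbf{1}_{\{c=k\}}$, with common eigenvalue of size $\alpha_n\asymp r^{*}\,n_C$.

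\textbf{Concentration and the clustering step.}  A matrix concentration inequality (matrix Bernstein, or a graph-moment bound controlling $\|\bar{D}^C-\E[\bar{D}^C\mid\V c]\|_{\mathrm{op}}$), combined with Davis--Kahan applied to the gap $\alpha_n$ between the top $K-1$ eigenvalues and the rest, gives that the top $K$ columns $\tilde{\bw}$ of $\bar{D}^C$ lie within Frobenius distance $o(\sqrt{n_C})$ of their population counterparts.  Since the population rows take only $K$ distinct values with pairwise separation $\Theta(1/\sqrt{n_C})$, the $K$-means step in Step 5 labels all but an $o(1)$ fraction of rows correctly up to a permutation of blocks, yielding \eqref{eq_miscl} (in fact with the weaker conclusion $o(1)$ in place of $1/2$).

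\textbf{Main obstacle.}  The crux is showing that the type-dependent signal in $d_g$ is not drowned out by branching noise.  The two relevant scales are the martingale fluctuation $\lambda_1^{r/2}$ and the type projection $\lambda_2^{\,r}$; their ratio $(\lambda_2^2/\lambda_1)^{r/2}$ diverges under \eqref{eq_main_cond} and collapses below it, which is exactly the KS transition.  Making this rigorous requires (i) a uniform coupling of neighborhoods at radii near $r^{*}$ to independent MGW trees---delicate because $r^{*}$ is close to the correlation length $\log n/\log\lambda_1$, so the usual local tree-likeness breaks down and one must argue via a careful exploration/sprinkling scheme---and (ii) joint $L^2$ control of $W_v$ and the type-martingale limits $W_v^{(k)}$ via the multi-type Athreya--Kesten--Stigum theorem, which is needed to ensure the $o(1)$ remainder in the expansion of $d_g$ remains negligible after being multiplied by $4r^{*}=\Theta(\log n)$ in the centered squared-distance matrix.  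Once these two ingredients are in hand, the spectral and $K$-means conclusions are routine.
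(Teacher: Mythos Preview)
Your approach and the paper's differ substantially in how the geodesic distance is handled, though both conclude with the same Davis--Kahan $+$ $K$-means endgame.

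The paper does \emph{not} attempt a second-order expansion of $d_G$. Instead it proves directly (Theorems \ref{thm_geo_dis_bnd} and \ref{thm_geo_dis}) that $d_G(u,v)=(1+o(1))\tau_{c_uc_v}$, where $\tau_1,\tau_2$ solve the explicit equations \eqref{eq_sbm_ev1}--\eqref{eq_sbm_ev2}. The mechanism is a matching pair of bounds: Lemma~\ref{lm_low_bnd} counts pairs at distance below $(1-\varepsilon)\tau_j$ and shows there are only $O(n^{2-\varepsilon})$ of them, while Lemma~\ref{lm_up_bnd2} shows each pair is within $(1+\varepsilon)\tau_j$ with probability $1-\exp(-\Omega(n^{2\eta}))$. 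Both lemmas rest on the MGW coupling (Lemma~\ref{lemma_brpr_sbm}) together with the Bordenave bound (Theorem~\ref{thm_bp1}) for $\langle\phi_k,Z_t\rangle$; the role of condition \eqref{eq_main_cond} is that it makes the equations \eqref{eq_sbm_ev1} and \eqref{eq_sbm_ev2} have distinct solutions, i.e.\ $\sigma_1\neq\sigma_2$. These two lemmas combine to give the Frobenius estimate $\|\mathbf{D}/\log n-\mathbb{D}\|_F=o(n)$, after which Davis--Kahan is applied to $\mathbf{D}/\log n$ against the deterministic block matrix $\mathbb{D}$ (Lemmas~\ref{lm_eigsp}--\ref{lm_rows}). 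Notice that the paper's perturbation argument is run on the raw rescaled distance matrix, \emph{not} on the MDS-centered $\bar D^C$ that the algorithm actually computes.

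Your route is more refined and, in one respect, more honest to the stated algorithm: you keep the vertex-specific martingale limits $W_v$ in the expansion of $d_G$, and you use the double-centering in $\bar D^C$ to annihilate the resulting separable term $u_i+u_j$. The Kesten--Stigum heuristic you invoke (signal scale $\lambda_2^r$ versus noise scale $\lambda_1^{r/2}$) is exactly the right picture. What your sketch leaves unresolved, however, is the status of the term $Y$: you first say it is ``built from the $\{W_v^{(k)}\}$'', which are random and vertex-dependent, but then write it as $Y(c_{v_i},c_{v_j})$, a deterministic function of types only. After double-centering, the surviving object is a random perturbation of $4r^\ast[HYH]$, and controlling that perturbation at the scale $r^\ast=\Theta(\log n)$ is precisely the hard analytic content that the paper handles instead by the cruder but sufficient pair-counting of Lemma~\ref{lm_low_bnd}. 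In short, the paper trades your finer first-passage-type expansion for a direct concentration statement that already yields the Frobenius bound; your route would in principle give sharper control but demands a substantially more delicate argument than what is outlined.
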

\textbf{Notes:} \begin{enumerate}
\item \eqref{eq_main_cond} marks the phase transition conjectured by \cite{decelle2011asymptotic}.
\item A close reading of our proof shows that as $(p-q)^2/K(p+(K-1)q) \rar \infty$, $\frac{1}{n}\sum_{i=1}^n\mathbf{1}\left(\V{c}(v_i) \neq \hat{\V{c}}(v_i)\right) \stackrel{P}{\rar} 0$.
\end{enumerate}

We conjecture that our conclusion in fact holds under the following conditions,
\begin{itemize}
\item[(A1)]\label{ass_main1}\hspace{0.1in} We consider $\gl_1 > 1$, $\gl_1> \max_{j \geq 2} \gl_j$, $1\leq j\leq K$ and $\gl_K > 0$. For $M$, there exists a $k$ such that $(M^k)_{ab} > 0$ for all $a,b = 1, \ldots, K$. Also, $\pi_j > 0$, for $j=1, \ldots, K$.
\item[(A2)]\label{ass_main2}\hspace{0.1in} Each vertex has the same asymptotic average degree $\ga > 1$, that is,
\begin{align*}
\ga = \sum_{k=1}^K \pi_kB_{ak} = \sum_{k=1}^K M_{ak}, \hspace{0.1in} \mbox{ for all } a\in \{1,\ldots,K\} 
\end{align*}
\item[(A3)]\label{ass_main3}\hspace{0.1in} We assume that
\begin{align*}
\gl_K^2 > \gl_1
\end{align*} 
or alternatively, there exists real positive $t$, such that,
\begin{align*}
\sum_{k=1}^K \phi_k(a)\gl_k^t\phi_k(b) \leq n,\ \ \ \mbox{ for all } a,b =1, \ldots, K
\end{align*}

\end{itemize}
Note that (A1)-(A3) all hold for the case we consider. In fact, under our model,
\begin{align*}
\gl_1 = \frac{p + (K-1)q}{K},\ \ \ \gl_2 = \frac{p-q}{K},\ \ \ \gl_2 = \gl_3 = \cdots = \gl_K
\end{align*}
with (A3) being the condition of the Theorem.

Our argument will be stated in a form that is generalizable and we will indicate revisions in intermediate statements as needed, pointing in particular to a lemma whose conclusion only holds if an implication of (A3) we conjecture is valid.

The theoretical analysis of the algorithm has two main parts -
\begin{itemize}
\item[I.] Finding the limiting distribution of graph distance between two typical vertices of type $a$ and type $b$ (where, $a, b = 1, \ldots, K$). This part of the analysis is highly dependent on results from multi-type branching processes and their relation with stochastic block models. The proof techniques and results are borrowed from \cite{bollobas2007phase}, \cite{bhamidi2011first} and \cite{athreya1972branching}.
\item[II.] Finding the behavior of the top $K$ eigenvectors of the graph distance matrix $D$ using the limiting distribution of the typical graph distances. This part of analysis is highly dependent on perturbation theory of linear operators. The proof techniques and results are borrowed from \cite{kato1995perturbation}, \cite{chatelin1983spectral} and \cite{MR3010899}.
\end{itemize}
We will state two theorems corresponding to I and II above.

\begin{theorem}
\label{thm_geo_dis_bnd}
Under our model, the graph distance $d_G(u, v)$ between two uniformly chosen vertices of type $a$ and $b$ respectively, conditioned on being connected, satisfies the following asymptotic relation -
\begin{itemize}
\item[(i)] If $a = b$, for any $\gve > 0$, as $n \rar \infty$, 
\begin{align}
\label{eq_geo_diff1a}
\Pr\left[(1-\gve)\tau_1\leq d_G(u,v) \leq (1+ \gve)\tau_1\right] = 1 - o(1)
\end{align}
where, $\tau_1$ is the minimum real positive $t$, which satisfies the relation below,
\begin{align}
\label{eq_sbm_ev1}
\left[\gl_2^t +\frac{\gl_1^t - \gl_2^t}{K}\right] = n
\end{align}
\item[(ii)] 
If $a\neq b$, for any $\gve > 0$, as $n \rar \infty$, 
\begin{align}
\label{eq_geo_diff2a}
\Pr\left[(1-\gve)\tau_2\leq d_G(u,v) \leq (1+ \gve)\tau_2\right] = 1 - o(1)
\end{align}
where, $\tau_2$ is the minimum real positive $t$, which satisfies the relation below, 
\begin{align}
\label{eq_sbm_ev2}
\left[\frac{\gl_1^t - \gl_2^t}{K}\right] = n
\end{align}
\end{itemize}
\end{theorem}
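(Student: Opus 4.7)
The plan is to prove Theorem~\ref{thm_geo_dis_bnd} via the standard coupling between neighborhood exploration in the sparse SBM and a multi-type Galton--Watson branching process whose mean offspring matrix is $M=\Pi B$ (so that its spectrum is $\lambda_1>\lambda_2=\cdots=\lambda_K$ as in the paper). Under our $\rho_n=1/n$ scaling, if we start a breadth-first exploration from a fixed vertex of type $a$, then for $t$ below the threshold $t^\ast=(\tfrac12-\varepsilon)\log n/\log\lambda_1$ the $t$-ball is a tree and its type-decomposition matches the BP distribution to within $o(1)$ total variation; this is the form of coupling carried out in Bollob\'as--Janson--Riordan and in Bhamidi--van der Hofstad--Hooghiemstra. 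The key quantitative input is that the expected number of type-$c$ descendants of a type-$a$ root at generation $t$ equals $(M^t)_{ac}$, which in our symmetric case spectrally decomposes as $(M^t)_{aa}=\lambda_2^t+(\lambda_1^t-\lambda_2^t)/K$ and $(M^t)_{ab}=(\lambda_1^t-\lambda_2^t)/K$ for $a\neq b$. These are precisely the quantities set equal to $n$ in \eqref{eq_sbm_ev1} and \eqref{eq_sbm_ev2}, which identifies $\tau_1,\tau_2$ as the thresholds where the typical ball exhausts the available vertices of the relevant type.

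For the lower bounds in \eqref{eq_geo_diff1a} and \eqref{eq_geo_diff2a}, I would bound $\Pr[d_G(u,v)\le\ell]$ above by the expected number of length-$\ell$ paths from $u$ to $v$. A direct summation over intermediate type sequences yields expectation $\sim K(M^\ell)_{ab}/n$, which is $o(1)$ whenever $\ell<(1-\varepsilon)\tau$ because, by the defining equation of $\tau$, $(M^\tau)_{ab}=\Theta(n)$ and $(M^\ell)_{ab}$ grows like $\lambda_1^\ell$ (with the type-dependent $\lambda_2^\ell$ correction). A union bound then gives the lower bound.

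For the matching upper bound I would argue at the birthday-paradox scale: take $t=t'\approx\tau/2$ just above the threshold and grow independent BP-neighborhoods $B_t(u)$ and $B_t(v)$. By the Kesten--Stigum theorem, which applies precisely because condition (A3), $\lambda_K^2>\lambda_1$, is the $L^2$-survival hypothesis, the type-vector $N_t=(|B_t(u)\cap\{\text{type }c\}|)_c$ concentrates around $(M^t\mathbf{1}_a)W_u$ for a nonnegative martingale limit $W_u$, with fluctuations of order $\lambda_2^t$. The number of candidate edges between $B_t(u)$ and $B_t(v)$ is then $\sum_{c,c'}N_t(u;c)N_t(v;c')B_{cc'}/n$, which has mean of order $K(M^{2t+1})_{ab}/n\to\infty$ once $\ell=2t+1>(1+\varepsilon)\tau$; computing a second moment and applying Chebyshev shows that at least one such edge exists with probability $1-o(1)$, so $d_G(u,v)\le(1+\varepsilon)\tau$. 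The conditioning on $\{u,v\text{ connected}\}$ is handled by the standard fact that in the supercritical regime $\lambda_1>1$ both vertices lie in the giant component with positive probability equal to $(1-q_{\mathrm{ext}})^2+o(1)$, where $q_{\mathrm{ext}}$ is the BP extinction probability, and the neighborhood law conditioned on nonextinction is still amenable to the Kesten--Stigum analysis.

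The main obstacle is making the second-order correction quantitative. A naive argument only yields $d_G(u,v)\sim\log n/\log\lambda_1$ and does not distinguish $\tau_1$ from $\tau_2$; this distinction is exactly what Theorem~\ref{thm_mis} downstream needs in order to recover the $\lambda_2$-signal for clustering. Capturing it requires tracking the type composition of $B_t(u)$, not just its size, and showing that the deviation of $N_t(u;c)$ from its mean $(M^t)_{ac}W_u$ is governed by terms of order $\lambda_2^{t}\cdot\text{(mean-zero random variable)}$ rather than by the dominant $\lambda_1^t$ scale. The hypothesis $\lambda_2^2>\lambda_1$ is precisely what guarantees these second-order fluctuations have finite normalized variance, so that the $\lambda_2^t$ signal survives in the expected edge-count between $B_t(u)$ and $B_t(v)$ and produces the $\tau_1$ vs.\ $\tau_2$ separation predicted by \eqref{eq_sbm_ev1}--\eqref{eq_sbm_ev2}.
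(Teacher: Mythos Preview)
Your proposal is correct and follows essentially the same architecture as the paper: couple the neighborhood exploration to the multi-type Galton--Watson process with mean matrix $M$, obtain the lower bound by a first-moment argument showing that $(M^\ell)_{ab}/n=o(1)$ for $\ell<(1-\varepsilon)\tau$, and obtain the upper bound by growing balls of radius $\approx\tau/2$ from both endpoints and sprinkling an edge between them. Your identification of $\lambda_2^2>\lambda_1$ as the $L^2$/Kesten--Stigum condition that makes the $\lambda_2^t$-scale type composition visible above the $\lambda_1^{t/2}$ noise is exactly the mechanism the paper exploits.

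The only substantive difference is the concentration input. You invoke the classical Kesten--Stigum martingale limit for $\langle\phi_k,Z_t\rangle/\lambda_k^t$; the paper instead imports Bordenave's quantitative bound (their Theorem~\ref{thm_bp1}), which gives
\[
|\langle\phi_k,Z_s\rangle-\lambda_k^{s-t}\langle\phi_k,Z_t\rangle|\le C(t+1)^2\lambda_1^{s/2}(\log n)^{3/2}
\]
with probability $1-n^{-\beta}$. For Theorem~\ref{thm_geo_dis_bnd} itself (one uniformly random pair) your Kesten--Stigum route is sufficient. The paper's choice pays off downstream: the $1-n^{-\beta}$ tail allows a union bound over all $O(n^2)$ pairs, which is what drives Lemma~\ref{lm_low_bnd} (stated as a bound on the \emph{number} of short pairs, not a single-pair probability) and hence the Frobenius-norm Theorem~\ref{thm_geo_dis}. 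Your path-counting lower bound also yields the pair-count statement after a Markov step, but for the upper bound uniformly over pairs you would eventually need a quantitative tail of the Bordenave type rather than bare almost-sure convergence.
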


In Theorem \ref{thm_geo_dis_bnd} we have a point-wise result. To use matrix perturbation theory for part II we need the following.
\begin{theorem}
\label{thm_geo_dis}
Let $\D_B$ be the restriction of the geodesic matrix to vertices in the big component of $G_n$. Then, under our model, 
\begin{align*}
\Pr\left[\left|\left|\frac{\bld}{\log n} - \D\right|\right|_F \leq o(n)\right] = 1-o(1)
\end{align*}
where, $\D_{ij} \equiv \gs_1 = \tau_1/\log n$, if $v_i$ and $v_j$ have same type and $\D_{ij} \equiv \gs_2 = \tau_2/\log n$, otherwise, where, $\tau_1$ and $\tau_2$ are solutions $t$ in Eq. \eqref{eq_sbm_ev1} and \eqref{eq_sbm_ev2} respectively.
\end{theorem}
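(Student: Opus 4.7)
The plan is to lift the pointwise concentration of Theorem~\ref{thm_geo_dis_bnd} to a Frobenius-norm bound on the whole geodesic matrix. Expanding,
\begin{align*}
\left\|\frac{\bld}{\log n} - \D\right\|_F^2 = \sum_{v_i, v_j \in V(G^C)} \left(\frac{d_G(v_i, v_j)}{\log n} - \D_{ij}\right)^2 \equiv \sum_{i,j} X_{ij}^2,
\end{align*}
and since the index set has at most $n^2$ pairs, by Markov's inequality it suffices to prove that $\E[X_{ij}^2] = o(1)$ uniformly in $(v_i, v_j) \in V(G^C)^2$.

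Two ingredients would produce this uniform second-moment bound. First, the diameter of $G^C$ in a supercritical SBM (here $\gl_1 = (p+(K-1)q)/K > 1$ follows from \eqref{eq_main_cond}) is $O(\log n)$ with probability $1-o(1)$, a standard consequence of the tree-tail estimates for the associated multi-type branching process; on this high-probability event, and since $\gs_1$ and $\gs_2$ are $O(1)$ constants (each equals $\tau_k/\log n$ with $\tau_k = \Theta(\log n)$ solving \eqref{eq_sbm_ev1}--\eqref{eq_sbm_ev2}), we obtain a deterministic bound $|X_{ij}| \le C$. Second, Theorem~\ref{thm_geo_dis_bnd}, rescaled by $\log n$ so that $\tau_k$ becomes $\gs_k$, directly gives $\Pr(|X_{ij}| > \gve) = o(1)$ for every fixed $\gve > 0$. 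Splitting the expectation,
\begin{align*}
\E[X_{ij}^2] \le \gve^2 + C^2 \Pr(|X_{ij}| > \gve) + C^2 \Pr\bigl(\mathrm{diam}(G^C) > C'\log n\bigr) = \gve^2 + o(1),
\end{align*}
and letting $\gve \downarrow 0$ after $n \to \infty$ yields $\E[X_{ij}^2] = o(1)$ uniformly in $(i,j)$; summing over at most $n^2$ pairs gives $\E\|\bld/\log n - \D\|_F^2 = o(n^2)$, and Markov's inequality finishes the proof.

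The main obstacle is arguing that the $o(1)$ in $\Pr(|X_{ij}| > \gve) = o(1)$ is uniform over the deterministic choice of $(i,j)$, because Theorem~\ref{thm_geo_dis_bnd} is phrased for a uniformly chosen pair of given types. By the i.i.d.\ symmetric-prior structure on labels ($\pi_a = 1/K$) together with the conditional independence of SBM edges given $\V{c}$, the conditional distribution of $d_G(v_i, v_j)$ given $(c_i, c_j) = (a,b)$ is the same for every deterministic pair, so the tail bound transfers to each fixed $(i,j)$. Conditioning on membership in the giant component presents no additional issue, since within $G^C$ all pairs are connected, so the ``conditioned on being connected'' qualifier in Theorem~\ref{thm_geo_dis_bnd} is automatically satisfied. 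The other delicate ingredient, the logarithmic diameter of $G^C$, is the ``truly random'' input; it would be imported from the supercritical branching process analysis already used in the proof of Theorem~\ref{thm_geo_dis_bnd}, rather than reproved here.
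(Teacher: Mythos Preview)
Your approach is sound in outline but takes a different route from the paper's. The paper does \emph{not} pass through Theorem~\ref{thm_geo_dis_bnd} at all; it goes back to the two lemmas underlying it and exploits their quantitative content separately. For the lower tail it invokes Lemma~\ref{lm_low_bnd}, which is a \emph{counting} statement: with high probability the number of same-type pairs with $d_G(v,w)\le(1-\gve)\tau_1$ is $O(n^{2-\gve})$ (and similarly for cross-type pairs and $\tau_2$). For the upper tail it invokes Lemma~\ref{lm_up_bnd2}, whose failure probability $\exp(-\Omega(n^{2\eta}))$ is small enough to union-bound over all $n^2$ pairs. Combining these on the good event gives directly
\[
\sum_{i,j}\Bigl(\tfrac{\bld_{ij}}{\log n}-\D_{ij}\Bigr)^2 \;\le\; O(n^{2-\gve})\cdot O(1)\;+\;n^2\cdot O(\gve^2),
\]
and then $\gve$ is sent to $0$. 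Your route---pointwise tail $o(1)$ from Theorem~\ref{thm_geo_dis_bnd}, exchangeability for uniformity, then $\E[X_{ij}^2]=o(1)$, linearity, and Markov---is more elementary in that it never needs the exponential rate of Lemma~\ref{lm_up_bnd2} or the explicit count of Lemma~\ref{lm_low_bnd}; the paper's route, in exchange, yields the slightly sharper explicit bound above without any second-moment bookkeeping.

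One technical slip in your write-up: in the displayed bound for $\E[X_{ij}^2]$, the term $C^2\Pr\bigl(\mathrm{diam}(G^C)>C'\log n\bigr)$ is not justified, because on that bad event $|X_{ij}|$ is \emph{not} bounded by $C$---the geodesic distance could in principle be much larger than $C'\log n$, and you have not controlled $\E[X_{ij}^2\mathbf{1}(\text{diam large})]$. The fix is easy: either (i) invoke the truncation at $k\log n$ built into Step~1 of the algorithm, which makes $|X_{ij}|\le C$ deterministically, or (ii) avoid expectations altogether and argue on the intersection of the two high-probability events $\{\mathrm{diam}\le C'\log n\}$ and $\{\text{number of pairs with }|X_{ij}|>\gve\ \text{is}\ \le \delta n^2\}$ (the latter by Markov on the \emph{count}, using linearity), on which $\|\bld/\log n-\D\|_F^2\le \gve^2 n^2+C^2\delta n^2$ holds pointwise.
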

To generalize Theorem \ref{thm_mis}, we need appropriate generalizations of Theorem \ref{thm_geo_dis_bnd} and \ref{thm_geo_dis}. Heuristically, it may be argued that the generalizations $(\tau_{sb})$, $a,b=1, \ldots, K$ should satisfy the equations,
\begin{align}
\label{eq_sbm_ev}
\sum_{k=1}^K \phi_k(a)\gl_k^t\phi_k(b) = (S^t)_{ab} = n, \ \ \ \mbox{ for } a\leq b \in [K]
\end{align}
Our conjecture is that (A1)-(A3) imply that the equations have asymptotic solutions and that the statements of Theorem \ref{thm_geo_dis_bnd} and \ref{thm_geo_dis} hold with obvious modifications.

Note that in Theorem \ref{thm_geo_dis_bnd}, since $\gl_j = \gl_2$, $2\leq j\leq K$ there are effectively only two equations and modifications are also needed for other degeneracies in the parameters. We next turn to a branching process result in \cite{bordenave2015non} which we will use heavily.

\subsection{A Key Branching Process Result}
\label{sec_br_proc}
As others have done we link the network formed by SBM with the tree network generated by multi-type Galton-Watson branching process. In our case, the Multi-type branching process (MTBP) has type space $S = \{1, \ldots, K\}$, where a particle of type $a \in S$ is replaced in the next generation by a set of particles distributed as a Poisson process on $S$ with intensity $(B_{ab}\pi_b)_{b = 1}^K = (M_{ab})_{b=1}^K$. Recall the definitions of $B$, $M$ and $S$ from Section \ref{sec_sbm}. We denote this branching process, started with a single particle of type $a$, by $\cb_{B,\pi}(a)$. We write $\cb_{B,\pi}$ for the same process with the type of the initial particle random, distributed according to $\V{\pi}$. According to Theorem 8.1 of Chapter 1 of \cite{mode1971multitype}, the branching process has a positive survival probability if $\gl_1 > 1$, where, $\gl_1$ is the Perron-Frobenius eigenvalue of $M$, a positive regular matrix. Recall that for our special $M$, $\gl_1 = \frac{p - q}{K} + 1$.

\begin{definition}
\label{def_survival_prob}
\begin{itemize}
\item[(a)] Define $\rho(B,\pi; a)$ as the probability that the branching process, $\cb_{B,\pi}(a)$, survives for eternity. 
\item[(b)] Define,
\begin{align}
\label{eq_br_surv}
\rho\equiv \rho(B,\pi) \equiv \sum_{a=1}^K \rho(B,\pi; a)\pi_a
\end{align}
as the \textbf{survival probability} of the branching process $\cb_{B,\pi}$ given that its initial distribution is $\V{\pi}$
\end{itemize}
\end{definition}

We denote $Z_t = (Z_t(a))_{a=1}^K$ as the population of particles of $K$ different types, with $Z_t(a)$ denoting particles of type $a$, at generation $t$ for the Poisson multi-type branching process $\cb_{B,\pi}$, with $B$ and $\V{\pi}$ as defined in Section \ref{sec_geo_theory}. From Theorem 24 of \cite{bordenave2015non}, we get that 
\begin{theorem}[\cite{bordenave2015non}]
\label{thm_bp1}
Let $\gb > 0$ and $Z_0 = x \in \mathbb{N}^K$ be fixed. There exists $C=C(x,\gb) > 0$ such that with probability at least $1 - n^{-\gb}$, for all $k\in [K]$, all $s,t \geq 0$, with $0\leq s< t$, 
\begin{align}
\label{eq_bp1}
|\langle \phi_k, Z_s\rangle - \gl_k^{s-t}\langle \phi_k, Z_t\rangle| \leq C(t+1)^2 \gl_1^{s/2}(\log n)^{3/2}
\end{align}
\end{theorem}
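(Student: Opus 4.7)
The plan is to exploit the martingale structure induced by the left eigenvectors of the mean matrix $M$. For each $k\in[K]$, set $W_k(t) := \gl_k^{-t}\langle \phi_k, Z_t\rangle$. Because $\phi_k$ is a left eigenvector of $M$ with eigenvalue $\gl_k$ and $\E[Z_{t+1}\mid \cf_t] = M^\top Z_t$ under the Poisson offspring law, $(W_k(t))_{t\geq 0}$ is a martingale with respect to the natural filtration $\cf_t = \gs(Z_0,\ldots,Z_t)$. The target bound is then equivalent to
\begin{align*}
\gl_k^s\,|W_k(s) - W_k(t)| \leq C(t+1)^2 \gl_1^{s/2}(\log n)^{3/2},\qquad 0\leq s < t,
\end{align*}
and must be established uniformly in $s,t,k$ with probability at least $1-n^{-\gb}$.

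First I would compute the conditional variance of an increment. Poisson independence of offspring gives $\mbox{Var}[\langle \phi_k, Z_{t+1}\rangle\mid \cf_t] = \sum_a Z_t(a)\sum_b M_{ab}\phi_k(b)^2 \leq C\|Z_t\|_1$, hence $\mbox{Var}[W_k(t+1)-W_k(t)\mid \cf_t]\leq C\gl_k^{-2(t+1)}\|Z_t\|_1$. A standard branching argument (for instance via the Kesten--Stigum martingale $W_1$) shows that for any $\gb'>0$, $\|Z_t\|_1 \leq C\gl_1^t\log n$ simultaneously for all $t\leq C_0\log n$ with probability at least $1 - n^{-\gb'}$. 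Summing the conditional variances from $s$ to $t$ yields a predictable quadratic variation of order $\log n\cdot \sum_{u=s}^t\gl_1^u\gl_k^{-2u}$, which in every regime of $\gl_1/\gl_k^2$ is controlled by $(t-s)(\log n)\gl_1^{s}\gl_k^{-2s}$ up to endpoint terms.

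Next I would apply a Freedman--Bernstein inequality for martingales with bounded increments (the single-step increment is bounded by a polynomial in $n$ with high probability, via a rough bound on $\|Z_t\|_\infty$). This produces, for each fixed triple $(k,s,t)$, a deviation estimate on $\gl_k^s|W_k(s)-W_k(t)|$ of the desired shape $C(\log n)^{3/2}\gl_1^{s/2}$ with error probability $n^{-(\gb'+\mathrm{const})}$. A final union bound over the $O(K(\log n)^2)$ triples absorbs the $(\log n)^2$ loss into the $(t+1)^2$ prefactor and delivers the conclusion after adjusting $\gb'$.

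The main obstacle is the regime $\gl_k^2\leq \gl_1$, where the incremental variance $(\gl_1/\gl_k^2)^u$ is non-decreasing in $u$, so $W_k$ does not converge in $L^2$; a naive Doob inequality cannot then deliver the sharp decay $\gl_1^{s/2}\gl_k^{-s}$ in $s$. Only a Freedman-type bound, which jointly exploits the precise growth of the predictable quadratic variation and an almost-sure bound on each increment, will yield the stated rate. Carefully coupling these two hypotheses uniformly for $t = O(\log n)$ is the technical heart of the argument.
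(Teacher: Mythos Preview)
The paper does not give its own proof of this statement: it is quoted verbatim as Theorem~24 of \cite{bordenave2015non} and used as a black box, so there is no argument in the present paper to compare your proposal against. Your outline---recognize $W_k(t)=\gl_k^{-t}\langle\phi_k,Z_t\rangle$ as a martingale, control the predictable quadratic variation via a high-probability bound on $\|Z_t\|_1$, and apply a Freedman--Bernstein inequality followed by a union bound over $(k,s,t)$---is precisely the standard route and is essentially how Bordenave proves it.

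One remark: the statement here is explicitly the \emph{special case} in which the bound \eqref{eq_bp1} holds for all $k\in[K]$, which in the paper's setting is justified because $\gl_2=\cdots=\gl_K$ and assumption~(A3) forces $\gl_K^2>\gl_1$. Hence your discussion of the regime $\gl_k^2\leq\gl_1$ as ``the main obstacle'' is not actually needed for this theorem; that regime is treated separately in the general version, Theorem~\ref{thm_bp2}, where a different bound \eqref{eq_bp2} applies for those $k$. In the present special case the predictable quadratic variation $\sum_{u=s}^{t-1}(\gl_1/\gl_k^2)^u$ is a convergent geometric series dominated by its first term, so the $L^2$-martingale structure already suffices and the Freedman argument goes through cleanly.
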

\begin{itemize}
\item[\textbf{Remark:}] \hspace{0.4in}The above stated theorem is a special case of the general theorem stated in \cite{bordenave2015non}. The general theorem is required for generalizing Theorem \ref{thm_mis}. The general version of the theorem is
\begin{theorem}[\cite{bordenave2015non}]
\label{thm_bp2}
Let $\gb > 0$ and $Z_0 = x \in \mathbb{N}^K$ be fixed. There exists $C=C(x,\gb) > 0$ such that with probability at least $1 - n^{-\gb}$, for all $k\in [K_0]$ (where, $K_0$ is the largest integer such that $\gl_k^2 > \gl_1$ for all $k \leq K_0$), all $s,t \geq 0$, with $0\leq s< t$, 
\begin{align}
\label{eq_bp1}
|\langle \phi_k, Z_s\rangle - \gl_k^{s-t}\langle \phi_k, Z_t\rangle| \leq C(t+1)^2 \gl_1^{s/2}(\log n)^{3/2}
\end{align}
and for all $k \in [K]\backslash [K_0]$, for all $t \geq 0$, 
\begin{align}
\label{eq_bp2}
|\langle \phi_k, Z_t\rangle| \leq C (t+1)^2\gl_1^{t/2}(\log n)^{3/2}
\end{align}
Finally, for all $k \in [K]\backslash [K_0]$, all $t \geq 0$, $\E|\langle \phi_k, Z_t\rangle|^2 \leq C(t+1)^3\gl_1^t$.
\end{theorem}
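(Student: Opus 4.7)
My plan is to exploit a martingale structure on the projections of $Z_t$ onto eigenvectors of the mean offspring operator $M$. The starting observation is that for each $k \in [K]$, the process $N_t^{(k)} := \gl_k^{-t}\langle \phi_k, Z_t\rangle$ is a martingale with respect to the natural filtration $\mathcal F_t = \gs(Z_0, \ldots, Z_t)$, because $\phi_k$ is an eigenvector of $M$ with eigenvalue $\gl_k$. The Poisson offspring law gives the clean conditional variance formula
\begin{align*}
\text{Var}\bigl(\langle \phi_k, Z_{t+1}\rangle \mid \mathcal F_t\bigr) = \sum_j Z_t(j)\sum_i M_{ji}\phi_k(i)^2,
\end{align*}
whose expectation is controlled by the Perron--Frobenius growth $\E|Z_t| \leq C\gl_1^t$.

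The dichotomy between the two regimes of the theorem emerges when summing martingale variances. Telescoping yields
\begin{align*}
\E\bigl|N_s^{(k)} - N_t^{(k)}\bigr|^2 \;\leq\; C\sum_{u=s}^{t-1}\gl_k^{-2(u+1)}\gl_1^u.
\end{align*}
If $\gl_k^2 > \gl_1$ (i.e., $k \in [K_0]$), the series converges geometrically in $s$; multiplying back by $\gl_k^{2s}$ gives $\E|\langle\phi_k, Z_s\rangle - \gl_k^{s-t}\langle\phi_k, Z_t\rangle|^2 \leq C\gl_1^s$, which matches (after a square root) the right-hand side of \eqref{eq_bp1} up to the log correction. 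If $\gl_k^2 \leq \gl_1$, I instead iterate the direct recursion $\E\langle\phi_k, Z_{t+1}\rangle^2 = \gl_k^2\E\langle\phi_k, Z_t\rangle^2 + O(\gl_1^t)$ to get $\E\langle \phi_k, Z_t\rangle^2 \leq C(t+1)\gl_1^t$; the $(t+1)^3$ factor in the third assertion absorbs extra slack from generalized eigenvectors when eigenvalues coincide.

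The final step is to lift these second-moment estimates to uniform almost-sure bounds, which is where the $(\log n)^{3/2}$ factor enters. I will apply Freedman's Bernstein inequality for martingales to the increments of $\gl_k^{-u}\langle\phi_k, Z_u\rangle$: each increment is a centered sum of at most $|Z_u|$ independent Poisson vectors weighted by $\phi_k$, with the conditional variance computed above. Before invoking Freedman, I truncate to the event $\{|Z_u| \leq C\gl_1^u(\log n)^{3/2}\ \forall u \leq c\log n\}$, which by Markov on $\E|Z_u|^2$ and a union bound has probability at least $1 - n^{-(\gb+1)}$. A further union bound over the $O((\log n)^2)$ pairs $(s,t)$ with $0 \leq s < t \leq c\log n$ (a range that suffices because the process is either extinct or has subpolynomial size beyond this horizon) then delivers the uniform inequalities with the stated $(t+1)^2$ and $(\log n)^{3/2}$ prefactors.

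The hard part is the concentration step. Freedman's inequality gives sub-Gaussian tails only when the cumulative conditional variance is itself controlled, which here demands an a priori bound on $|Z_u|$. The cleanest resolution uses a stopping time $\tau$ that freezes the martingale the first moment $|Z_u|$ threatens to exceed the truncation threshold, and one must separately show $\Pr(\tau \leq c\log n) \leq n^{-\gb}$ via the same Poisson-MTBP variance machinery. Obtaining exactly $(t+1)^2$, rather than some cruder polynomial, requires careful bookkeeping of the maximal inequality across intermediate times $u \in [s,t]$, not a pointwise estimate at the endpoints alone, so the Doob-maximal form of Freedman's inequality is needed.
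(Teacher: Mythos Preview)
The paper does not prove this theorem at all: it is quoted verbatim from Bordenave \cite{bordenave2015non} (their Theorem~24) and used as a black box, so there is no ``paper's own proof'' to compare against. Your sketch is in fact a faithful outline of how Bordenave establishes the result: the martingale $\gl_k^{-t}\langle\phi_k,Z_t\rangle$, the Poisson conditional-variance identity, the geometric dichotomy according to whether $\gl_k^2>\gl_1$, and a Bernstein/Freedman-type martingale concentration on a high-probability event where $|Z_u|$ is polynomially bounded are exactly the ingredients used there.

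Two small cautions if you want to turn the sketch into a proof. First, the $n$ in the statement is an external parameter (the size of the graph to which the branching process is later coupled); the theorem is only intended and only used for $t\le c\log n$, so your restriction to that horizon is correct, but your justification ``the process is either extinct or has subpolynomial size beyond this horizon'' is backwards---beyond $c\log n$ the surviving process is superpolynomial, and the point is rather that the coupling with the graph breaks down, not that the branching-process bound is vacuous. Second, in the general (non-balanced) setting the martingale is built from the \emph{left} eigenvectors of $M$, so be careful that $\langle\phi_k,Z_t\rangle$ is indeed a martingale in the convention you adopt; in the balanced case $\pi_a\equiv 1/K$ treated in this paper, left and right eigenvectors coincide up to scaling and the issue disappears.
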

\end{itemize}

\subsection{The Neighborhood Exploration Process}
\label{sec_nbhd_exp}
The neighborhood exploration process of a vertex $v$ in graph $G$ generated from an SBM gives us a handle on the link between local structures of a graph from SBM and multi-type branching process. Recall the definitions of SBM parameters from Section \ref{sec_sbm} and the definitions of Poisson multi-type branching process from Section \ref{sec_br_proc} . We assume all vertices of graph $G_n$ generated from a stochastic block model has been assigned a community or type $\xi_i$ (say) for vertex $v_i \in V(G_n)$. 

The \textit{neighborhood exploration process}, $(G, v)_L$, of a vertex $v$ in graph $G_n$, generates a \emph{spanning tree} of the induced subgraph of $G_n$ consisting of vertices of at most $L$-distance from $v$. 
The spanning tree is formed from the exploration process which starts from a vertex $v$ as the \emph{root} in the random graph $G_n$ generated from stochastic block model. The set of vertices of type $a$ of the random graph $G_n$  that are neighbors of $v$ and has not been previously explored are called $\Gamma_{1, a}(v)$ and $N_{1,a}(v) = |\Gamma_{1,a}(v)|$ for $a=1, \ldots, K$ and $N_1(v) = (N_{1,1}(v), \ldots, N_{1, K}(v))$. So, $\Gamma_1(v) = \{\Gamma_{1, 1}(v), \ldots, \Gamma_{1, K}(v)\}$ are the children of the root $v$ at step $\ell=1$ in the spanning tree of the neighborhood exploration process. The neighborhood exploration process is repeated at second step by looking at the neighbors of type $a$ of the vertices in $\Gamma_1(v)$ that has not been previously explored and the set is called $\Gamma_{2, a}(v)$ and $N_{2, a}(v) = |\Gamma_{2, a}(v)|$ for $a=1, \ldots, K$. Similarly, $\Gamma_2(v) = \{\Gamma_{2, 1}(v), \ldots, \Gamma_{2, K}(v)\}$ are the children of vertices $\Gamma_1(v)$ at step $\ell=2$ in the spanning tree of the neighborhood exploration process. The exploration process is continued until step $\ell = L$. Note that the process stops when all the vertices in $G_n$ has been explored. So, if $G_n$ is connected, then, $L \leq $ the diameter of the graph $G_n$.

Since, we either consider $G_n$ connected or only the giant component of $G_n$, the neighborhood exploration process will end in a finite number of steps but the number of steps may depend on $n$ and is equal to the diameter, $L$, of the connected component of the graph containing the root $v$. It follows from Theorem 14.11 of \cite{bollobas2007phase} that
\begin{align}
\label{eq_diameter}
L/\log_{\gl_1}(n) \stackrel{P}{\rar} 1.
\end{align}



Now, we find a coupling relation between the \textit{neighborhood exploration process} of a vertex of type $a$ in stochastic block model and a multi-type Galton-Watson process, $\cb(a)$ starting from a vertex of type $a$. The Lemma is based on Proposition 31 of \cite{bordenave2015non}.

\begin{lemma}
\label{lemma_brpr_sbm}
Let $w(n)$ be a sequence such that $w(n) \rar \infty$ and $w(n)/n \rar 0$. Let $(T, v)$ be the random rooted tree associated with the Poisson multi-type Galton-Watson branching process defined in Section \ref{sec_sbm} started from $Z_0 = \gd_{c_v}$ and $(G, v)$ be the spanning tree associated with neighborhood exploration process of random SBM graph $G_n$ starting from $v$. For $\ell \leq \tau$, where $\tau$ is the number of steps required to explore $w(n)$ vertices in $(G, v)$, the total variation distance, $d_{\mbox{TV}}$, between the law of $(G, v)_\ell$ and $(T, v)_\ell$ at step $\ell$ goes to zero as $O\left(n^{-\frac{1}{2}}\vee w(n)/n\right) = o(1)$.
\end{lemma}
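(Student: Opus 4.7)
The plan is to construct an explicit coupling between the SBM neighborhood exploration and the Poisson multi-type Galton--Watson tree generation by generation, and then to accumulate the total variation error from a Binomial-to-Poisson approximation together with concentration of the type counts. First I would condition on the type vector $(c_1,\ldots,c_n)$, which is multinomial$(\V{\pi})$. Standard binomial concentration yields, with probability $1-o(n^{-1})$, that $|n_b - n\pi_b| \leq C\sqrt{n\log n}$ for every $b\in[K]$; this event supplies the $O(n^{-1/2})$ piece of the target rate.

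Next I would run breadth-first exploration and couple step by step. When processing a tree vertex $u$ of type $a$, the SBM generates its unexplored type-$b$ neighbors as $\text{Bin}(n_b^{\text{unexp}}, B_{ab}/n)$ while the Galton--Watson tree generates $\text{Pois}(B_{ab}\pi_b)$ type-$b$ children. A standard triangle-inequality decomposition (Le Cam's inequality plus a Poisson--Poisson bound) gives
\begin{align*}
d_{\text{TV}}\bigl(\text{Bin}(n_b^{\text{unexp}}, B_{ab}/n),\,\text{Pois}(B_{ab}\pi_b)\bigr) \leq n_b^{\text{unexp}}(B_{ab}/n)^2 + B_{ab}\,|n_b^{\text{unexp}}/n - \pi_b|,
\end{align*}
so each per-vertex, per-type coupling costs $O(1/n) + O(n^{-1/2}\sqrt{\log n}) + O(w(n)/n)$ after using $|n_b - n_b^{\text{unexp}}| \leq w(n)$. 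To accumulate without losing a factor of $\sqrt{w(n)}$, I would follow the argument of Proposition~31 of \cite{bordenave2015non}: view the whole exploration as a sequence of edge-attempts, each of which either succeeds against an unexplored vertex (matching a Poisson child in the tree) or \emph{collides} with an already-explored vertex. The total number of attempts through step $\tau$ is $O(w(n))$ in expectation because the sparse regime gives $O(1)$ mean offspring at every scale; each attempt contributes $O(1/n)$ of collision probability and $O(1/n)$ of Binomial--Poisson mismatch, so the aggregate error is $O(w(n)/n)$. Crucially, the $\sqrt{n\log n}$ type-count fluctuation should be treated as a one-time perturbation of the Poisson intensities rather than a fresh error at each step; after conditioning on $(n_b)_{b=1}^K$ it contributes only $O(n^{-1/2})$ to the total variation of the whole process.

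The hard part is exactly this sharp accumulation: a naive union bound over the $\leq w(n)$ explored vertices yields only $O(w(n)/\sqrt{n})$, whereas the claimed bound is $O(n^{-1/2}\vee w(n)/n)$. Overcoming this requires the two-part accounting above---factoring out the initial type-count fluctuation as a deterministic shift and charging collisions and Poisson-approximation errors to individual edges rather than to vertices---so that the edge-budget $O(w(n))$ combined with per-edge cost $O(1/n)$ produces the correct $O(w(n)/n)$ scaling.
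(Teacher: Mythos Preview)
Your core ingredients coincide with the paper's: the two total-variation inequalities
\[
d_{TV}\bigl(\mathrm{Bin}(m',\lambda/m),\mathrm{Poi}(m'\lambda/m)\bigr)\le \lambda/m,
\qquad
d_{TV}\bigl(\mathrm{Poi}(\lambda),\mathrm{Poi}(\lambda')\bigr)\le |\lambda-\lambda'|,
\]
together with the multinomial concentration $|n_b/n-\pi_b|=O(n^{-1/2})$ and the depletion bound $n_b-w(n)\le n_t(b)\le n_b$, are exactly what the paper invokes. Where you diverge is in scope: the paper's argument stops after establishing the \emph{one-step} offspring bound $d_{TV}(P_{t+1},Q_{t+1})=O(n^{-1/2}\vee w(n)/n)$ and then simply asserts that the lemma follows, effectively reading the phrase ``at step $\ell$'' as a statement about the offspring law at a single exploration step rather than about the joint law of the tree up to depth $\ell$. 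Your edge-budget accumulation---factoring the $O(n^{-1/2})$ type-count fluctuation out as a one-time intensity perturbation and charging collisions per edge to get $O(w(n)/n)$---is strictly more than the paper writes down. It is the correct way to pass from the per-step bound to a coupling of the full exploration (and is what the later applications of the lemma actually need), but the paper leaves that passage implicit.
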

\begin{proof}
Let us start the neighborhood exploration process starting with vertex $v$ of a graph generated from an SBM model with parameters $(P, \pi) = (B/n,\pi)$. Correspondingly the multi-type branching process starts from a single particle of type $\V{c}_v$, where, $\V{c}_v$ is the type or class of vertex $v$ in SBM. 

Let $t$ be such that $0\leq t < \tau$, where, $\tau$ is defined in the Lemma statement.
Now, for such a $t\geq 0$, let $(x_{t+1}(1), \ldots, x_{t+1}(K))$ be leaves of $(T, v)$ at time $t$ starting from a vertex $v_{t}$ generated by step $t$ of class $\V{c}_{v_{t}} = a$. Let $(y_{t+1}(1), \ldots, y_{t+1}(K))$ be the vertices exposed at step $t$ of the exploration process starting from a vertex of class $a$, where, $a \in [K]$. Now, if $\V{c}_{v_{t}}$ is of type $a$, then, we have $x_{t+1}(b)$ follows $\mbox{Bin}(n_t(b), B_{ab}/n)$ and $y_{t+1}(b)$ follows $\mbox{Poi}(\pi_bB_{ab})$ for $b = 1, \ldots, K$, where, $n_{t}(b)$ is the number of unused vertices of type $b$ remaining at time $t$ for $b = 1, \ldots, K$. Also, $y_{t+1}(b)$ for different $b$ are independent. Note that $n_b \geq n_{t}(b)\geq n_b - w(n)$ for $b = 1, \ldots, K$. So, since, we have $|n_b/n - \pi_b| = O(n^{-1/2})$ for $b = 1, \ldots, K$, we get that,
\begin{align*}
|n_t(b) - \pi_b| < O\left(n^{-1/2} + w(n)/n\right)\ \ \ \mbox{ for } b = 1, \ldots, K
\end{align*}

Now, we know that,
\begin{align*}
d_{TV}\left(\mbox{Bin}(m', \gl/m), \mbox{Poi}(m'\gl/m)\right) \leq \frac{\gl}{m}, \hspace{0.2in} d_{TV}\left(\mbox{Poi}(\gl), \mbox{Poi}(\gl')\right) \leq |\gl - \gl'|
\end{align*}
So, now, we have,
\begin{align*}
d_{TV}\left(P_{t+1}, Q_{t+1}\right) \leq O\left(n^{-1/2} \vee w(n)/n\right) = o(1)
\end{align*}
where, $P_{t+1}$ is the distribution of $y_{t+1}$ under neighborhood exploration process and $Q_{t+1}$ is the distribution of $x_{t+1}$ under the branching process, and hence Lemma \ref{lemma_brpr_sbm} follows.
\end{proof}

Now, we restrict ourselves to the giant component of $G_n$. The size of the giant component of $G_n$, $\mathcal{C}_1(G_n)$, of a random graph generated from SBM$(B, \pi)$ is related to the multi-type branching process through its survival probability as given in Definition \ref{def_survival_prob}. According to Theorem 3.1 of \cite{bollobas2007phase}, we have,
\begin{align}
\label{eq_giant_component}
\frac{1}{n}\mathcal{C}_1(G_n) \stackrel{P}{\rar} \rho(B, \pi)
\end{align}
Under this additional condition of restricting to the giant component, the branching process can be coupled with another branching process with a different kernel. The kernel of that branching process is given in following lemma.
\begin{lemma}
\label{lemma_brpr_cond}
If $v$ is in giant component of $G_n$, the new branching process has kernel $\left(B_{ab}\left(2\rho(B, \pi)/K - \rho^2(B, \pi)/K^2\right)\right)_{a,b = 1}^K$. 
\end{lemma}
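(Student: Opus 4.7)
My plan is to combine Lemma \ref{lemma_brpr_sbm} with the classical backbone decomposition of a Poisson multi-type BP conditioned on survival. By Lemma \ref{lemma_brpr_sbm}, the neighborhood exploration of $v$ in $G_n$ is within $o(1)$ in total-variation distance of the branching process $\cb_{B,\pi}(\V{c}_v)$ up to the first $w(n) = o(n)$ explored vertices, and by Eq. \eqref{eq_giant_component} the event that $v$ lies in the giant component of $G_n$ corresponds asymptotically to the survival-forever event for this BP. The lemma thus reduces to computing the Poisson offspring kernel of $\cb_{B,\pi}(\V{c}_v)$ conditioned on non-extinction.

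First, I would apply Poisson thinning: each type-$b$ offspring of a type-$a$ parent has an infinite subtree independently with probability $\rho(B,\pi;b)=\rho$ (equal across $b$ by the symmetry of the model), so the original Poisson offspring process of rate $B_{ab}\pi_b$ splits into independent surviving and dying Poisson processes of rates $B_{ab}\pi_b\rho$ and $B_{ab}\pi_b(1-\rho)$. Next, I would invoke the backbone-plus-bushes decomposition: conditioned on survival, the tree consists of an infinite backbone (each backbone particle has at least one surviving child) together with independent Poisson-many dying subtrees attached at each backbone particle.

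The factor $2\rho/K-\rho^2/K^2 = 1-(1-\rho/K)^2$ is then extracted by an inclusion-exclusion computing the probability that both endpoints of a candidate edge lie in the giant component, reflecting the two independent survival routes for an endpoint (the subtree rooted at $u$ away from $v$ surviving, or the remainder of $v$'s subtree surviving) combined with the $\pi_b = 1/K$ weighting for a type-$b$ neighbor. A direct Poisson computation, using the fixed-point identity $\rho = 1 - \exp(-\sum_b B_{ab}\pi_b\rho)$ characterizing the survival probability in the symmetric model, then identifies the conditioned exploration with a Poisson BP whose kernel is $B_{ab}\bigl(2\rho/K - \rho^2/K^2\bigr)$.

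The main obstacle is that conditioning a Poisson BP on survival does not directly produce a pure Poisson BP at the root: the offspring distribution becomes a conditional Poisson with the non-Markovian constraint of at least one surviving descendant, and this constraint propagates across generations. The cleanest remedy is the spine/backbone construction (in the style of Lyons--Pemantle--Peres), which re-represents the conditioned tree as an unconditioned Poisson BP with the modified kernel together with independent subcritical dying subtrees whose contribution to graph distances is asymptotically negligible. One must additionally verify that the coupling of Lemma \ref{lemma_brpr_sbm} is compatible with conditioning on giant-component membership; this is handled by first applying the coupling for the first $w(n)$ exploration steps (where the TV error is $o(1)$) and then passing to the global conditioning through Eq. \eqref{eq_giant_component}.
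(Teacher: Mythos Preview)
The paper does not actually prove this lemma: its entire proof reads ``The proof is given in Section 10 of \cite{bollobas2007phase}.'' So there is no argument in the paper to compare your proposal against; you are attempting to supply what the authors simply deferred to a reference.

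Your outline has the right architecture --- couple the exploration to a Poisson multi-type branching process via Lemma~\ref{lemma_brpr_sbm}, identify giant-component membership with survival of that process via Eq.~\eqref{eq_giant_component}, and then analyze the conditioned process through a backbone/spine decomposition with Poisson thinning. Where the sketch becomes shaky is the extraction of the specific factor $2\rho/K - \rho^2/K^2 = 1 - (1-\rho/K)^2$. Your inclusion-exclusion story (``both endpoints lie in the giant component'') would naturally produce a factor of the form $\rho(a)\rho(b)$, not $1-(1-\rho/K)^2$; and the appearance of $1/K$ inside the square is not explained by the weighting $\pi_b = 1/K$, since that weight is already absorbed into the mean matrix $M_{ab}=B_{ab}\pi_b$ that you are thinning. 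Moreover, as you yourself flag, the backbone-plus-bushes representation of a Poisson branching process conditioned on survival does \emph{not} literally yield a single unconditioned Poisson branching process with a modified kernel: the offspring law along the backbone is Poisson conditioned on at least one surviving child, which is not Poisson. Turning this into the clean assertion ``the new branching process has kernel $B_{ab}(2\rho/K - \rho^2/K^2)$'' therefore requires more than what you have written, and in fact requires going to the cited section of Bollob\'as--Janson--Riordan to pin down the precise sense in which the statement is meant and proved there.
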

\begin{proof}
The proof is given in Section 10 of \cite{bollobas2007phase}.
\end{proof}
Since, we will be restricting ourselves to the giant component of $G_n$, we shall be using the $B' \equiv \left(B_{ab}\left(2\rho(B, \pi)/K - \rho^2(B, \pi)/K^2\right)\right)_{a,b = 1}^K$ matrix as the connectivity matrix in stead of $B$. We abuse notation by referencing to the matrix $B'$ as $B$ too.

We proceed to prove the limiting behavior of typical distance between vertices $v$ and $w$ of $G_n$, where, $v, w\in V(G_n)$. We first try to find a lower bound for distance between two vertices. We shall separately give an upper bound and lower bounds for the distance between two vertices of the same type and different types. 

\begin{lemma}
\label{lm_low_bnd}
Under our model, for vertices $v,w\in V(G)$, if 
\begin{itemize}
\item[(a)] \hspace{0.05in} type of $v = $ type of $w = a$ (say), then, 
\begin{align*}
\left|\left\{\{v, w\}: d_G(v, w) \leq (1 - \gve)\tau_1\right\}\right| \leq O(n^{2-\gve}) \mbox{ with high probability}
\end{align*}
where, $\tau_1$ is the minimum real positive $t$, which satisfies Eq. \eqref{eq_sbm_ev1},
\item[(b)] \hspace{0.05in} type of $v = a \neq b =$  type of $w$ (say), then, 
\begin{align*}
\left|\left\{\{v, w\}: d_G(v, w) \leq (1 - \gve)\tau_2\right\}\right| \leq O(n^{2-\gve}) \mbox{ with high probability}
\end{align*}
where, $\tau_2$ is the minimum real positive $t$, which satisfies Eq. \eqref{eq_sbm_ev2}.
\end{itemize}
\end{lemma}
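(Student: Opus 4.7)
The plan is a first-moment (Markov) argument. The key observation is that $\tau_1, \tau_2$ are precisely the logarithmic scales at which the expected number of type-$b$ endpoints of a length-$s$ walk from a fixed type-$a$ vertex first reaches order $n$; this is the content of the spectral identities \eqref{eq_sbm_ev1} and \eqref{eq_sbm_ev2}. At length $\ell = (1-\gve)\tau_i$ the expected count drops to order $n^{-\gve}$ per pair, so the total expected number of close pairs is $O(n^{2-\gve})$ and a Markov inequality finishes the job.

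Concretely, for a fixed ordered pair $(v, w)$ of types $(a, b)$, the containment $\{d_G(v, w) \leq \ell\} \subseteq \bigcup_{s=0}^{\ell}\{A^s_{vw} \geq 1\}$ gives
\begin{align*}
\Pr[d_G(v, w) \leq \ell \mid \V{c}] \leq \sum_{s=0}^{\ell} \E[A^s_{vw} \mid \V{c}].
\end{align*}
With $\rho_n = 1/n$ and class sizes concentrating as $n_c/n = 1/K + O(n^{-1/2})$, a direct count of walks gives, uniformly for $s = O(\log n)$,
\begin{align*}
\E[A^s_{vw} \mid \V{c}] = \frac{K\,(M^s)_{ab}}{n}\bigl(1 + o(1)\bigr),
\end{align*}
the correction from walks that revisit vertices being of lower order since $s \ll \sqrt{n}$. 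Under our symmetric model, writing $M = B/K$ with $\gl_1 = (p+(K-1)q)/K$ and $\gl_2 = \cdots = \gl_K = (p-q)/K$, the spectral decomposition $B = (p-q)I + q\mathbf{1}\mathbf{1}^T$ yields
\begin{align*}
(M^s)_{aa} = \gl_2^s + \frac{\gl_1^s - \gl_2^s}{K}, \qquad (M^s)_{ab} = \frac{\gl_1^s - \gl_2^s}{K} \ \ (a \neq b),
\end{align*}
matching \eqref{eq_sbm_ev1} and \eqref{eq_sbm_ev2} exactly. Because both expressions are geometric in $s$ with bases $\gl_1, \gl_2$ bounded below by $1$, their values at $s = (1-\gve)\tau_i$ are at most $n^{1-\gve}$ up to constants, and the partial sum $\sum_{s \leq (1-\gve)\tau_i}$ is dominated by its last term.

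Putting the pieces together, $\Pr[d_G(v,w) \leq (1-\gve)\tau_i \mid \V{c}] = O(n^{-\gve})$; summing over the $O(n^2)$ pairs of the appropriate type gives $\E\,|\{(v,w): d_G(v,w) \leq (1-\gve)\tau_i\}| = O(n^{2-\gve})$, and Markov's inequality at level $n^{2-\gve/2}$ converts this into a high-probability statement (the form stated in the lemma is then recovered after relabeling $\gve$). The main obstacle is that the moment estimate is only clean while the coupling of Lemma \ref{lemma_brpr_sbm} to the Poisson multi-type branching process remains valid, i.e. while fewer than $w(n) \ll n$ vertices have been exposed; since the cumulative neighborhood at scale $(1-\gve)\tau_i$ has size $\Theta(n^{1-\gve})$, one may take $w(n) = n^{1-\gve/2}$ and remain safely inside the coupling regime. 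A secondary point is that after restricting to the giant component one must replace $B$ by the conditioned kernel $B'$ of Lemma \ref{lemma_brpr_cond}, which only rescales constants and leaves the leading asymptotics of $\tau_1, \tau_2$ (and hence the exponent $\gve$) unchanged.
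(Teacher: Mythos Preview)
Your first-moment path-counting argument is correct and is genuinely different from the paper's route. The paper does \emph{not} bound $\Pr[d_G(v,w)\le \ell]$ directly; instead it couples the neighborhood exploration $(G,v)_\ell$ to the Poisson multi-type branching process via Lemma~\ref{lemma_brpr_sbm}, and then invokes Bordenave's concentration result (Theorem~\ref{thm_bp1}) to obtain a \emph{high-probability, per-vertex} bound of the form $Z_t(a)\le \frac{1}{K}(\gl_1^t+(K-1)\gl_2^t)(1+o(1))$ and $Z_t(b)\le \frac{1}{K}(\gl_1^t-\gl_2^t)(1+o(1))$. From that it reads off $|\G_{\le D_i,b}(v)|=O(n^{1-\gve})$ w.h.p.\ and then sums over $v$. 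Your approach bypasses both the coupling and Theorem~\ref{thm_bp1} entirely: you compute $((\E A)^s)_{vw}=\frac{K}{n}(M^s)_{c_vc_w}(1+o(1))$ directly from the block structure, note that $(M^s)_{aa}$ and $(M^s)_{ab}$ are exactly the left-hand sides of \eqref{eq_sbm_ev1}--\eqref{eq_sbm_ev2}, and finish with Markov. This is more elementary and actually gives the stated $O(n^{2-\gve})$ bound with the \emph{same} $\gve$ without further relabeling, since $\gl_1^{\tau_i}\le Kn$ implies $\gl_1^{(1-\gve)\tau_i}\le (Kn)^{1-\gve}$. What the paper's heavier machinery buys is a uniform per-vertex neighborhood-size statement that dovetails with the two-sided exploration used in Lemma~\ref{lm_up_bnd2}; your argument gives only an aggregate pair count.

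Two small points. First, the inclusion $\{d_G(v,w)\le \ell\}\subseteq\bigcup_{s\le \ell}\{A^s_{vw}\ge 1\}$ is fine, but $\E[A^s_{vw}\mid\V{c}]$ is \emph{not} bounded above by $((\E A)^s)_{vw}$, because walks with repeated edges contribute $\prod_{e\,\text{distinct}}P_e\ge \prod_e P_e^{m_e}$. The clean fix is to count self-avoiding paths rather than all walks: if $d_G(v,w)\le \ell$ there is a simple path of some length $s\le \ell$, and the expected number of simple paths of length $s$ is trivially $\le ((\E A)^s)_{vw}$, after which your computation goes through verbatim. Second, your final paragraph is a red herring: the first-moment calculation is carried out directly in the SBM and needs neither the branching-process coupling of Lemma~\ref{lemma_brpr_sbm} nor the conditioned kernel $B'$ of Lemma~\ref{lemma_brpr_cond}; those enter the paper's proof precisely because it argues through $Z_t$ rather than through $\E[(\bar A)^s]$. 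You can drop that paragraph without loss.
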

\begin{proof}
Let $\G_d(v) \equiv \G_d(v, G_n)$ denote the $d$-distance set of $v$ in $G_n$, i.e., the set of vertices of $G_n$ at graph distance exactly $d$ from $v$, and let $\G_{\leq d}(v) \equiv \G_{\leq d}(v,G_n)$ denote the $d$-neighborhood $\cup_{d'\leq d}\G_{d'}(v)$ of $v$. Let $\G_{d,a}(v) \equiv \G_{d,a}(v, G_n)$ denote the set of vertices of type $a$ at $d$-distance in $G_n$ and let $\G_{\leq d,a}(v) \equiv \G_{\leq d, a}(v,G_n)$ denote the $d$-neighborhood $\cup_{d'\leq d}\G_{d', a}(v)$ of $v$ consisting of vertices of type $a$. Let $N^a_d$ be the number of particles at generation $d$ of the branching process $\cb_B(\gd_a)$ and $N^a_{d,c}$ be the number of particles at generation $d$ of the branching process $\cb_B(\gd_a)$ of type $c$. So, $N^a_d = \sum_{c=1}^K N^a_{d,c}$ and $Z_t(k) = \sum_{d=0}^tN^a_{d,k}$.

Lemma \ref{lemma_brpr_sbm} involved first showing that, for $n$ large enough, the neighborhood exploration process starting at a given vertex $v$ of $G_n$ with type $a$ could be coupled with the branching process $\cb_{B'} (\gd_a)$, where the $B'$ is defined by Lemma \ref{lemma_brpr_cond}. As noted we identify $B'$ with $B$. 

The neighborhood exploration process and multi-type branching process can be coupled so that for every $d$, $|\G_{d}(v)|$ is at most the number $N_d + O\left(n^{-\frac{1}{2}}\vee w(n)/n\right)$, where, $N_d$ is number of particles in generation $d$ of $\cb_{B}(\gd_a)$ and in $d$ generations at most $w(n)$ vertices of $G_n$ have been explored. 

From Theorem \ref{thm_bp1}, we get that with high probability
\begin{align*}
\left|\frac{\langle\phi_k, Z_t\rangle}{\gl_k^t} - \langle\phi_k, Z_0\rangle\right| \leq C(t+1)^2(\log n)^{3/2}
\end{align*}
Since, for any $x\in \R^K$, we get the unique representation, $x = \sum_{k=1}^K \langle x, \phi_k\rangle \phi_k$, for any basis $\{\phi_k\}_{k=1}^K$ of $\R^K$. If we take $x = e_b$, where, $e_b$ is the unit vector with $1$ at $b$-th co-ordinate and $0$ elsewhere, $b = 1, \ldots, K$, we can get
\begin{align*}
Z_t(b) \leq \sum_{k=1}^K\phi_k(b)\gl_k^t\phi_k(a)\left[Z_0(a) + C(t+1)^2(\log n)^{3/2}\right] 
\end{align*}
Now, under our model one representation of the eigenvectors is $\phi_1 = \frac{1}{\sqrt{K}}(1,\ldots, 1)$, $\phi_2 = \frac{1}{\sqrt{2}}(-1, 1,0,\ldots, 0)$, $\phi_3 = \frac{1}{\sqrt{6}}(-1, -1, 2, 0, \ldots, 0)$, $\cdots$, \\ $\phi_{K-1} = \frac{1}{\sqrt{K(K-1)}}(-1,\ldots,-1,K-1)$.
Now using the representation of eigenvectors for branching process starting from vertex of type $a$, $a \in [K]$, we get with high probability
\begin{eqnarray*}
\sum_{k=1}^KZ_t(k) & \leq & \gl_1^t\left[Z_0(a) + C(t+1)^2(\log n)^{3/2}\right] \\
Z_t(a) - Z_t(b) & \geq &\gl_2^t\left[-Z_0(a) - C(t+1)^2(\log n)^{3/2}\right],\hspace{0.1in} b=1,\ldots,K\mbox{ and } b\neq a.
\end{eqnarray*}
So, we can simplify, for each $a \in [K]$ with $Z_0(a) = 1$, with high probability,
\begin{eqnarray*}
Z_t(a) & \leq & \frac{1}{K}\left(\gl_1^t+(K-1)\gl_2^t\right)\left[1 + C(t+1)^2(\log n)^{3/2}\right] \\
Z_t(b) & \leq &\frac{\gl_1^t - \gl_2^t}{K}\left[1 + C(t+1)^2(\log n)^{3/2}\right],\hspace{0.1in} b\in [K]\mbox{ and } b\neq a.
\end{eqnarray*}


Set $D_1 = (1 - \gve)\tau_1$, where, $\tau_1$ is the solution to the equation
\begin{align*}
\left[\gl_2^t +\frac{\gl_1^t - \gl_2^t}{K}\right] = n
\end{align*}
and set $D_2 = (1 - \gve)\tau_2$, where, $\tau_2$ is the solution to the equation
\begin{align*}
\left[\frac{\gl_1^t - \gl_2^t}{K}\right] = n
\end{align*}
where, $\gve>0$ is fixed and small. Note that both $\tau_1$ and $\tau_2$ are of the order $O(\log n)$. Thus, with high probability, for $v$ of type $a$ and $w(n) = O(n^{1-\gve})$,
\begin{eqnarray*}
|\G_{\leq D_1, a}(v)| & = \sum_{d = 0}^{D_1} N^a_{d,a} & \leq Z_{D_1}(a) + O\left(D_1n^{-\frac{1}{2}}\vee w(n)/n\right) = O(n^{1 - \gve}) \\
|\G_{\leq D_2, b}(v)| & = \sum_{d = 0}^{D_2} N^a_{d,b} & \leq Z_{D_2}(b) + O\left(D_2n^{-\frac{1}{2}}\vee w(n)/n\right) = O(n^{1 - \gve})
\end{eqnarray*}
So, summing over $v\in C_a$ and $v \in C_b$, where, $C_a = \{i\in V(G)| \V{c}_i = a\}$ and $C_b = \{i \in V(G)| \V{c}_i = b\}$, we have, 
\begin{eqnarray*}
\sum_{v\in C_a}|\G_{\leq D_1, a}(v)| & = & \left|\left\{\{v, w\}: d_G(v, w) \leq (1 - \gve)\tau_1, v,w\in C_a\right\}\right| \\
\sum_{v\in C_a}|\G_{\leq D_2, b}(v)| & = & \left|\left\{\{v, w\}: d_G(v, w) \leq (1 - \gve)\tau_2, v\in C_a, w\in C_b\right\}\right| 
\end{eqnarray*}
and so with high probability
\begin{eqnarray*}
\left|\left\{\{v, w\}: d_G(v, w) \leq (1 - \gve)\tau_1, v,w\in C_a\right\}\right| & = & \sum_{v\in V(G_n)}|\G_{\leq D, a}(v)| = O(n^{2-\gve}) \\
\left|\left\{\{v, w\}: d_G(v, w) \leq (1 - \gve)\tau_2, v\in C_a, w\in C_b\right\}\right| & = & \sum_{v\in V(G_n)}|\G_{\leq D, b}(v)| = O(n^{2-\gve})
\end{eqnarray*}
The above statement is equivalent to
\begin{eqnarray*}
\Pr\left[\left|\left\{\{v, w\}: d_G(v, w) \leq (1 - \gve)\tau_1, v,w\in C_a\right\}\right|  \leq  O(n^{2-\gve}) \right] & = & 1 - o(1) \\
\Pr\left[\left|\left\{\{v, w\}: d_G(v, w) \leq (1 - \gve)\tau_2, v\in C_a, w\in C_b\right\}\right|  \leq  O(n^{2-\gve}) \right] & = & 1 - o(1)
\end{eqnarray*}
for any fixed $\gve > 0$. 
\end{proof}


Now, we upper bound the typical distance between two vertices of SBM graph $G_n$.

\begin{lemma}
\label{lm_up_bnd2}
Under our model, for vertices $v,w\in V(G)$ and conditioned on the event that the exploration process starts from a vertex in the giant component of $G$, if, 
\begin{itemize}
\item[(a)] \hspace{0.05in} type of $v = $ type of $w = a$ (say), then, 
\begin{align*}
\Pr\left(d_G(v, w) < (1 + \gve)\tau_1\right) = 1 - exp(-\Omega(n^{2\eta}))
\end{align*}
where, $\tau_1$ is the minimum real positive $t$, which satisfies Eq. \eqref{eq_sbm_ev1},
\item[(b)] \hspace{0.05in} type of $v = a \neq b = $ type of $w$ (say), then, 
\begin{align*}
\Pr\left(d_G(v, w) < (1 + \gve)\tau_2\right) = 1 - exp(-\Omega(n^{2\eta}))
\end{align*}
where, $\tau_2$ is the minimum real positive $t$, which satisfies Eq. \eqref{eq_sbm_ev2}.
\end{itemize}
\end{lemma}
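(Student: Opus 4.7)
The plan is to prove the matching upper bound via a two-sided ball-growth argument followed by an independent Bernoulli sprinkling between the boundaries. Fix $\eta \in (0,\gve/4)$ and set $d = \lfloor (\tfrac{1}{2}+\eta)\tau_\star\rfloor$, where $\tau_\star = \tau_1$ in case~(a) and $\tau_\star = \tau_2$ in case~(b). I would grow the balls $\G_{\leq d}(v)$ and $\G_{\leq d}(w)$ via the neighborhood exploration, lower-bound their type compositions by coupling with the multi-type branching process, and then reveal the bipartite edge set between the two boundaries to force a connecting edge via a Chernoff tail. Throughout, the conditioning on $v,w$ lying in the giant component is absorbed into the modified kernel $B'$ of Lemma~\ref{lemma_brpr_cond}, so the coupling of Lemma~\ref{lemma_brpr_sbm} with the supercritical Poisson Galton--Watson tree $\cb_{B'}(\gd_a)$ remains available.

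For the growth lower bound, I would invoke Lemma~\ref{lemma_brpr_sbm} with $w(n) = n^{1-\eta'}$ for a small $\eta'>0$, so that up to depth $d$ the exploration coincides with $\cb_{B'}(\gd_a)$. The two-sided estimate of Theorem~\ref{thm_bp1}, combined with the explicit eigenvectors of $B$ used in Lemma~\ref{lm_low_bnd}, reverses the inequalities proved there and yields, with probability at least $1 - n^{-\gb}$, the matching coordinate-wise lower bounds
\begin{align*}
Z_t(a) &\geq \tfrac{1}{K}\bigl(\gl_1^t + (K-1)\gl_2^t\bigr) - C(t+1)^2(\log n)^{3/2}, \\
Z_t(b) &\geq \tfrac{1}{K}\bigl(\gl_1^t - \gl_2^t\bigr) - C(t+1)^2(\log n)^{3/2}\quad (b\neq a),
\end{align*}
so that $|\G_{d,c}(v)| \geq c_1 (M^d)_{c_v c}$ for every type $c$, and analogously for $w$. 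The total exposure is $O(\gl_1^d) = O(n^{1/2+\eta}) \ll w(n)$, so the coupling is valid throughout.

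To execute the sprinkling, run the two explorations in parallel, interleaving levels and deliberately refraining from querying any edge whose endpoints would lie one in each partial tree. On the high-probability event that both neighborhoods reach depth $d$ disjointly with the type counts above, the conditional expectation of the number of cross-edges is
\begin{align*}
\frac{1}{n}\sum_{c,c'} |\G_{d,c}(v)|\,|\G_{d,c'}(w)|\,B_{cc'}\;\geq\;c_1^2\cdot\frac{K (M^{2d+1})_{c_v c_w}}{n}\;\geq\;c_2\,n^{2\eta},
\end{align*}
where the last step uses the defining equations \eqref{eq_sbm_ev1}--\eqref{eq_sbm_ev2} of $\tau_\star$ (in case~(a), $(M^{\tau_1})_{aa}=n$; in case~(b), $(M^{\tau_2})_{ab}=n$) together with the excess factor $\gl_1^{2\eta\tau_\star}\asymp n^{2\eta}$ coming from $2d+1 \approx (1+2\eta)\tau_\star$. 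These candidate cross-edges are genuinely independent Bernoulli variables, since none were revealed during the BFS, so a standard Chernoff tail bound gives at least one edge with failure probability $\exp(-\Omega(n^{2\eta}))$. Any such edge certifies $d_G(v,w) \leq 2d+1 \leq (1+\gve)\tau_\star$, finishing both parts.

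The main obstacle is preserving independence long enough to execute the sprinkling; this is handled by the parallel-exposure protocol, which queries no cross-edge until both explorations have reached depth $d$. A secondary subtlety is that Theorem~\ref{thm_bp1} only controls the spectral coefficients $\langle\phi_k,Z_t\rangle$ up to an additive $\gl_1^{t/2}\cdot\mathrm{polylog}(n)$ error; turning these into coordinate-wise lower bounds on $Z_t(c)$ of the required order $(M^t)_{c_v c}\asymp \gl_1^t/K + \gl_2^t\mathbf{1}(c_v=c)$ rests on the hypothesis $\gl_K^2 > \gl_1$ of (A3) (satisfied under \eqref{eq_main_cond}) and on the explicit sign pattern of the eigenvectors of the symmetric $B$. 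It is precisely this step that confines the present argument to the symmetric model, and its generalization is the content of the authors' conjecture surrounding equation \eqref{eq_sbm_ev}; the polynomial error $n^{-\gb}$ from the branching-process coupling is absorbed by taking $\gb$ large, the dominant contribution to the stated $\exp(-\Omega(n^{2\eta}))$ probability coming from the sprinkling.
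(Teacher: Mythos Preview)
Your proposal is correct and follows essentially the same architecture as the paper: grow the two neighborhoods to depth roughly $\tfrac{1}{2}\tau_\star$, couple each exploration to the multi-type branching process via Lemma~\ref{lemma_brpr_sbm}, lower-bound the type profile on the boundary using the two-sided estimate of Theorem~\ref{thm_bp1}, and then sprinkle the unrevealed cross-edges between $\G_d(v)$ and $\G_d(w)$. Your packaging of the cross-edge expectation as $\tfrac{K}{n}(M^{2d+1})_{c_vc_w}$ is cleaner than the paper's, which simply picks a single pair $(c_1,c_2)$ with $B_{c_1c_2}>0$ and bounds the count by $\Omega(n^\eta)$; your route lands directly on the $n^{2\eta}$ exponent in the lemma statement. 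One caveat you should flag more sharply: the $1-n^{-\gb}$ guarantee from Theorem~\ref{thm_bp1} is polynomial, so it cannot be ``absorbed'' into $\exp(-\Omega(n^{2\eta}))$---the honest statement (and what is actually used downstream in Theorem~\ref{thm_geo_dis}) is that the sprinkling step fails with probability $\exp(-\Omega(n^{2\eta}))$ on a $1-n^{-\gb}$ event, which the paper's own proof also elides.
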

\begin{proof}
We consider the multi-type branching process with probability kernel $P_{ab}=\frac{B_{ab}}{n}$ $\forall a,b = 1, \ldots, K$ and the corresponding random graph $G_n$ generated from stochastic block model has in total $n$ nodes. We condition that branching process $\cb_{K}$ survives.

Note that an upper bound $1$ is obvious, since we are bounding a probability, so it suffices to prove a corresponding lower bound. We may and shall assume that $B_{ab} > 0$ for some  $a,b$. 

Again, let $\G_d(v) \equiv \G_d(v, G_n)$ denote the $d$-distance set of $v$ in $G_n$, i.e., the set of vertices of $G_n$ at graph distance exactly $d$ from $v$, and let $\G_{\leq d}(v) \equiv \G_{\leq d}(v,G_n)$ denote the $d$-neighborhood $\cup_{d'\leq d}\G_{d'}(v)$ of $v$. Let $\G_{d,a}(v) \equiv \G_{d,a}(v, G_n)$ denote the set of vertices of type $a$ at $d$-distance in $G_n$ and let $\G_{\leq d,a}(v) \equiv \G_{\leq d, a}(v,G_n)$ denote the $d$-neighborhood $\cup_{d'\leq d}\G_{d', a}(v)$ of $v$ consisting of vertices of type $a$. Let $N^a_d$ be the number of particles at generation $d$ of branching process $\cb_B(\gd_a)$ and $N^a_{d,c}$ be the number of particles at generation $d$ of branching process $\cb_B(\gd_a)$ of type $c$. So, $N^a_d = \sum_{c=1}^K N^a_{d,c}$ and $Z_t(k) = \sum_{d=0}^tN^a_{d,k}$.

By Lemma \ref{lemma_brpr_sbm}, for $w(n) = o(n)$,
\begin{align}
\label{eq_brpr_count1}
|\G_{d,c}(v)| \geq N_{d,c} - O\left(n^{-\frac{1}{2}}\vee w(n)/n\right),\ c = 1,\ldots,K.
\end{align}
for all $d$ s.t. $|\G_{\leq d}(v)| < \om(n)$. This relation between the number of vertices at generation $d$ of type $c$ of branching process $\cb_{B}(\gd_a)$, denoted by $N_{d,c}$ and the number of vertices of type $c$ at distance $d$ from $v$ for the neighborhood exploration process of $G_n$, denoted by $|\G_{d, c}(v)|$ becomes highly important later on in this proof, where, $c=1, \ldots, K$. Note that the relation only holds when $|\G_{\leq d}(v)| < \om(n)$ for some $\om(n)$ such that $\om(n)/n \rar 0$ as $n\rar \infty$.



From Theorem \ref{thm_bp1} of the branching process, we get that with high probability
\begin{align*}
\left|\frac{\langle\phi_k, Z_t\rangle}{\gl_k^t} - \langle\phi_k, Z_0\rangle\right| \leq C(\log n)^{3/2}
\end{align*}

Now following the same line of argument as in proof of Lemma \ref{lm_low_bnd}, for each $a \in [K]$ with $Z_0(a) = 1$, with high probability we get that,
\begin{eqnarray*}
Z_t(a) & \leq & \frac{1}{K}\left(\gl_1^t+(K-1)\gl_2^t\right)\left[1 + C(t+1)^2(\log n)^{3/2}\right] \\
Z_t(b) & \leq &\frac{\gl_1^t - \gl_2^t}{K}\left[1 + C(t+1)^2(\log n)^{3/2}\right],\hspace{0.1in} b\in [K]\mbox{ and } b\neq a.
\end{eqnarray*}

Let $D_1$ be the integer part of $(1+2\eta)\tau'_1$, where, $\tau'_1$ is the solution to the equation
\begin{align}
\label{eq_eq1}
\left[\gl_2^t +\frac{\gl_1^t - \gl_2^t}{K}\right] = n^{1/2 - \eta} 
\end{align}
Thus conditioned on survival of the branching process $\cb_{B}(\gd_a)$, $N^a_{D_1,a} \geq n^{1/2+\eta/2}$. 
Set $D_2 = (1 + \eta)\tau'_2$, where, $\tau'_2$ is the solution to the equation
\begin{align}
\label{eq_eq2}
\gl_1^t = n^{1/2+\eta}
\end{align}
Thus conditioned on survival of branching process $\cb_{B}(\gd_a)$, $N^a_{D_2,b} \geq n^{1/2+\eta/2}$ for $b=1,\ldots, K$.
Furthermore $\lim_{d \rar \infty} \Pr(N^a_d \neq 0) = \rho(B, a)$.

Now, we have conditioned that the branching process with kernel $B$ is surviving. The right-hand side tends to $\rho(B, a) = 1$ as $\eta \rar 0$. Hence, given any fixed $\gm > 0$, if we choose $\eta > 0$ small enough, and for large enough $n$, we have
\begin{eqnarray*}
\Pr\left(\forall b: N^a_{D_2,b} \geq n^{1/2+\eta/2}\right) & = & 1,\\
\Pr\left(N^a_{D_1,a} \geq n^{1/2+\eta/2}\right) & = & 1.
\end{eqnarray*}

Now, the neighborhood exploration process and branching process can be coupled so that for every $d$, $|\G_{d}(v)|$ is at most the number $N_d$ of particles in generation $d$ of $\cb_{B}(a)$ from Lemma \ref{lemma_brpr_sbm} and Eq \eqref{eq_brpr_count1}. So, we have for $v$ of type $a$, with high probability,
\begin{eqnarray*}
|\G_{\leq D_1, a}(v)| & \leq & \E\sum_{d = 0}^{D_1} N_d = o(n^{2/3}) \\
|\G_{\leq D_2, b}(v)| & \leq & \E\sum_{d = 0}^{D_2} N_d = o(n^{2/3})
\end{eqnarray*}
if $\eta$ is small enough, since $D_1$ is integer part of $(1+2\eta)\tau'_1$ and $D_2$ is the integer part of $(1+2\eta)\tau'_2$, where, $\tau'_1$ and $\tau'_2$ are solutions to Eq. \eqref{eq_eq1} and \eqref{eq_eq2}. Note that the power $2/3$ here is arbitrary, we could have any power in the range $(1/2, 1)$. 
So, now, we are in a position to apply Eq \eqref{eq_brpr_count1}, as we have $|\G_{\leq D}(v)| \leq O(n_a^{2/3}) < \om(n)$, with $\om(n)/n \rar 0$.

Now let $v$ and $w$ be two fixed vertices of $G(n,P)$, of types $a$ and $b$ respectively. We explore both their neighborhoods at the same time, stopping either when we reach distance $D$ in both neighborhoods, or we find an edge from one to the other, in which case $v$ and $w$ are within graph distance $2D + 1$. We consider two independent branching processes $\cb_{B}(a)$, $\cb'_{B}(b)$, with $N^a_{d, c}$ and $N^b_{d, c}$ vertices of type $c$ in generation $d$ respectively. By the previous argument, with high probability we encounter $o(n)$ vertices in the exploration so, by the argument leading to \eqref{eq_brpr_count1}, whp either the explorations meet, or 
\begin{eqnarray*}
|\G^a_{d,c}(w)| &\geq& Z^{(a)}_d(c) - O\left(n^{-\frac{1}{2}}\vee n^{-\frac{1}{3}}\right),\ c = 1,\ldots,K, c\neq a\\
|\G^b_{d,c}(w)| &\geq& Z^{(b)}_d(c) - O\left(n^{-\frac{1}{2}}\vee n^{-\frac{1}{3}}\right),\ c = 1,\ldots,K, c\neq b
\end{eqnarray*}
with the explorations not meeting, where, $Z^{(a)}$ is the branching process starting from $Z_0 = \gd_a$, for $a=1, \ldots, K$. Using bound on $N^a_{d,c}$ and the independence of the branching processes, it follows that for $a=b$, 
\begin{align*}
\Pr\left(d(v,w) \leq 2D_1+1 \mbox{ or } |\G^a_{D_1,c}(v)|, |\G^a_{D_1,c}(w)| \geq n^{1/2+\eta}\right) \geq 1 - o(1). 
\end{align*}
and for $a \neq b$,
\begin{align*}
\Pr\left(d(v,w) \leq 2D_2+1 \mbox{ or } \forall c :|\G^a_{D_2,c}(v)|, |\G^b_{D_2,c}(w)| \geq n^{1/2+\eta}\right) \geq 1 - o(1). 
\end{align*}
Write these probabilities as $\Pr(A_j \cup B_j)$, $j =1,2$. We now show that $\Pr(A_j^c\cap B_j) \rar 0$ and since $\Pr(A_j \cup B_j) \rar 1$, we will have $\Pr(A_j) \rar 1$.
We have not examined any edges from $\G_D(v)$ to $\G_D(w)$, so these edges are present independently with their original unconditioned probabilities. 
For any end vertex types $c_1$, $c_2$, the expected number of these edges is at least $|\G^a_{D,c}(v)||\G^a_{D,c}(w)|B_{c_1c_2}/n$ for first probability and $|\G^a_{D,c_1}(v)||\G^b_{D,c_2}(w)|B_{c_1c_2}/n$ for second probability. Choosing $c_1, c_2$ such that $B_{c_1c_2} > 0$, this expectation is $\Omega((n^{1/2+\eta/2})^2/n) = \Omega(n^{\eta})$. It follows that at least one edge is present with probability $1 - \exp(-\Omega(n^{\eta})) = 1 - o(1)$. If such an edge is present, then $d(v, w) \leq 2D_1 + 1$ for first probability and $d(v, w) \leq 2D_1 + 1$ for second probability. So, the probability that the second event in the above equation holds but not the first is $o(1)$. Thus, the last equation implies that
\begin{eqnarray*}
\Pr(d(v,w) \leq 2D_1+1) &\geq& (1 - \gm)^2 - o(1) \geq 1 - 2\gm - o(1)\\
\Pr(d(v,w) \leq 2D_2+1) &\geq& (1 - \gm)^2 - o(1) \geq 1 - 2\gm - o(1).
\end{eqnarray*}
where, $\gm > 0$ is arbitrary. Choosing $\eta$ small enough, we have $2D + 1 \leq (1 + \gve)\log (n)/\log \gl$. As $\gm$ is arbitrary, we
have
\begin{eqnarray*}
\Pr(d(v,w) \leq (1+\gve)\tau_1) &\geq& 1 - \exp(-\Omega(n^{2\eta})),\\
\Pr(d(v,w) \leq (1+\gve)\tau_2) &\geq& 1 - \exp(-\Omega(n^{2\eta})).
\end{eqnarray*}
and the lemma follows.
\end{proof}

The equations \eqref{eq_sbm_ev1} and \eqref{eq_sbm_ev2} control the asymptotic bounds for the graph distance $d_G(v, w)$ between two vertices $v$ and $w$ in $V(G_n)$. Under the condition (A3) it follows that $\gl_2^2>\gl_1$. If we consider $\gl_2^2 = c\gl_1$, where, $c$ is a constant, then the equations \eqref{eq_sbm_ev1} and \eqref{eq_sbm_ev2} can be written in the form of quadratic equations. So, the solutions $\tau_1$ and $\tau_2$ exist under the condition $c^{\tau_1}$ and $c^{\tau_2}$ are of the order $O(n)$ and the resulting solutions $\tau_1$ and $\tau_2$ are both of the order $O(\log n)$. Also, from the expression of the solutions $\tau_1$ and $\tau_2$, the limits $\frac{\tau_1}{\log n}$ and $\frac{\tau_2}{\log n}$ exist and we shall define the limit as $\gs_1$ and $\gs_2$ respectively.  

\subsection{Proof of Theorem \ref{thm_geo_dis_bnd} and Theorem \ref{thm_geo_dis}}
\label{sec_det_geo}
\subsubsection{Proof of Theorem \ref{thm_geo_dis_bnd}}
We shall try to prove the limiting behavior of the typical graph distance in the giant component as $n \rar \infty$. The Theorem essentially follows from Lemma \ref{lm_low_bnd} - \ref{lm_up_bnd2}. Under the conditions mentioned in the Theorem, part (a) follows from Lemma \ref{lm_low_bnd}(a) and \ref{lm_up_bnd2}(a) and part (b) follows from Lemma \ref{lm_low_bnd}(b) and \ref{lm_up_bnd2}(b).
\subsubsection{Proof of Theorem \ref{thm_geo_dis}}
From Definition \ref{def_geo_dis}, we have that $\bld_{ij} = $ graph distance between vertices $v_i$ and $v_j$, where, $v_i, v_j \in V(G_n)$.
%
From Lemma \ref{lm_low_bnd}, we get for any vertices $v$ and $w$ with high probability, 
\begin{eqnarray*}
\left|\left\{\{v, w\}: d_G(v, w) \leq (1 - \gve)\tau_1\right\}\right| \leq O(n^{2-\gve}), \mbox{ if type of }v = \mbox{ type of } w\\
\left|\left\{\{v, w\}: d_G(v, w) \leq (1 - \gve)\tau_2\right\}\right| \leq O(n^{2-\gve}), \mbox{ if type of }v \neq \mbox{ type of } w.
\end{eqnarray*}
Also, from Lemma \ref{lm_up_bnd2}, we get
\begin{eqnarray*}
\Pr\left(d_G(v, w) < (1 + \gve)\tau_1\right) = 1 - \exp(-\Omega(n^{2\eta})), \mbox{ if type of }v = \mbox{ type of } w,\\
\Pr\left(d_G(v, w) < (1 + \gve)\tau_2\right) = 1 - \exp(-\Omega(n^{2\eta})), \mbox{ if type of }v = \mbox{ type of } w.
\end{eqnarray*}
Now, $\gs_1 = \tau_1/\log n$ and $\gs_2 = \tau_2/\log n$ are asymptotically constant as both $\tau_1$ and $\tau_2$ are of the order $\log n$ as follows from equations \eqref{eq_sbm_ev1} and \eqref{eq_sbm_ev2}. So, putting the two statements together, we get that with high probability,
\begin{align*}
\sum_{i,j = 1: type(v_i)\neq type(v_j)}^n\left(\frac{\bld_{ij}}{\log n} - \D_{ij}\right)^2 =  O(n^{2-\gve}) + O(n^2).\gve^2 
\end{align*}
since, by Lemma \ref{lemma_brpr_sbm}, $\gre = o(1)$ and $(1 - \exp(-\Omega(n^{2\eta})))^{n^2} \rar 1$ as $n\rar \infty$.
So, putting the two cases together, we get that with high probability, for some $\gve > 0$,
\begin{align*}
\sum_{i,j = 1}^n\left(\frac{\bld_{ij}}{\log n} - \D_{ij}\right)^2 =  O(n^{2-\gve}) + O(n^2).\gve^2 = o(n^{2}). 
\end{align*}
Hence, for some $\gve > 0$,
\begin{align*}
\left|\left|\frac{\bld}{\log n} - \D\right|\right|_F \leq o(n).
\end{align*}

We have completed proofs of Theorems \ref{thm_geo_dis_bnd} and \ref{thm_geo_dis}.

\subsection{Perturbation Theory of Linear Operators}
\label{sec_det_pert}
We now establish part II of our program. $D$ can be considered as a perturbation of the operator $\D$.

The Davis-Kahan Theorem \cite{davis1970rotation}] gives a bound on perturbation of eigenspace instead of eigenvector, as discussed previously. 
\begin{theorem}[Davis-Kahan (1970)\cite{davis1970rotation}]
\label{thm_dk}
Let $\bh, \bh' \in \R^{n\times n}$ be symmetric, suppose $\cV \subset \R$ is an interval, and suppose for some positive integer $d$ that $\bw, \bw' \in \R^{n\times d}$ are such that the columns of $\bw$ form an orthonormal basis for the sum of the eigenspaces of $\bh$ associated with the eigenvalues of $\bh$ in $\cV$ and that the columns of $\bw'$ form an orthonormal basis for the sum of the eigenspaces of $\bh'$ associated with the eigenvalues of $\bh'$ in $\cV$. Let $\gd$ be the minimum distance between any eigenvalue of $\bh$ in $\cV$ and any eigenvalue of $\bh$ not in $\cV$ . Then there exists an orthogonal matrix $\br \in \R^{d\times d}$ such that $||\bw\br - \bw'||_{F} \leq \sqrt{2}\frac{||\bh - \bh'||_{F}}{\gd}$.
\end{theorem}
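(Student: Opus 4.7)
The plan is to prove the theorem via the classical sin-Theta framework: reduce the Frobenius-distance claim to a bound on the principal angles between the two $d$-dimensional subspaces via a Sylvester equation, then translate principal-angle information back into the vector-level distance $\|\bw\br - \bw'\|_F$.

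First I would extend $\bw$ to a full orthonormal basis by choosing $\bw_\perp \in \R^{n \times (n-d)}$ whose columns span the orthogonal complement of $\mathrm{col}(\bw)$. Then $\bh \bw_\perp = \bw_\perp \Lambda_2$, where $\Lambda_2$ is the diagonal matrix of the eigenvalues of $\bh$ lying outside $\cV$. Analogously, $\bh' \bw' = \bw' \Lambda_1'$, where $\Lambda_1'$ holds the eigenvalues of $\bh'$ in $\cV$, which lie inside $\cV$ by definition of $\bw'$.

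The central step is the two-sided computation of $\bw_\perp^T (\bh - \bh') \bw'$, which yields the Sylvester identity
\begin{equation*}
\Lambda_2 \bigl(\bw_\perp^T \bw'\bigr) \;-\; \bigl(\bw_\perp^T \bw'\bigr) \Lambda_1' \;=\; \bw_\perp^T (\bh - \bh') \bw'.
\end{equation*}
Because $\sigma(\Lambda_1') \subset \cV$ while $\sigma(\Lambda_2) \cap \cV = \emptyset$, one argues that $\sigma(\Lambda_1')$ and $\sigma(\Lambda_2)$ are separated by at least $\delta$ (using the definition of $\delta$ together with the interval structure of $\cV$). The standard Sylvester norm bound then gives
\begin{equation*}
\|\bw_\perp^T \bw'\|_F \;\leq\; \frac{\|\bh - \bh'\|_F}{\delta}.
\end{equation*}
The matrix $\bw_\perp^T \bw'$ encodes the sines of the principal angles between $\mathrm{col}(\bw)$ and $\mathrm{col}(\bw')$, so this is exactly a $\sin \Theta$ estimate.

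Finally, to pass from a subspace bound to a vector-wise bound with a best rotation, I would take the SVD $\bw^T \bw' = U (\cos \Theta) V^T$ and set $\br = U V^T$. Expanding $\|\bw \br - \bw'\|_F^2 = 2\,\tr(I - \cos\Theta)$ and invoking the elementary inequality $1 - \cos\theta \leq \sin^2 \theta$ yields $\|\bw \br - \bw'\|_F \leq \sqrt{2}\,\|\sin \Theta\|_F = \sqrt{2}\,\|\bw_\perp^T \bw'\|_F$, and combining with the Sylvester bound gives the claim. I expect the main obstacle to be precisely the spectral-separation step used in the Sylvester bound: one must verify carefully that $\delta$, as defined solely via $\bh$'s spectrum relative to $\cV$, really does separate $\sigma(\Lambda_1')$ from $\sigma(\Lambda_2)$. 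The remaining ingredients (the Sylvester bound and the $1 - \cos\theta \leq \sin^2\theta$ manipulation) are routine once that separation is in hand.
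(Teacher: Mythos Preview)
The paper does not prove this theorem; it is quoted as a classical result from Davis and Kahan (1970) and simply invoked later in the proof of Lemma~\ref{lm_eigsp}. There is therefore no argument in the paper to compare your proposal against.

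Your outline is the standard $\sin\Theta$ proof and the overall architecture (Sylvester identity for $\bw_\perp^T\bw'$, then the $1-\cos\theta\le\sin^2\theta$ step to pass to the best rotation) is correct. The concern you flag at the end is not merely a matter of care but is the one genuine gap. With $\gd$ defined, as in the statement, purely from the spectrum of $\bh$, the Sylvester step does \emph{not} go through as written: your identity separates $\sigma(\Lambda_2)$ (the eigenvalues of $\bh$ outside $\cV$) from $\sigma(\Lambda_1')$ (the eigenvalues of $\bh'$ inside $\cV$), and the distance between these two sets need not be at least $\gd$. An eigenvalue of $\bh'$ lying in $\cV$ may sit arbitrarily close to the boundary of the interval, and hence closer to $\sigma(\Lambda_2)$ than any eigenvalue of $\bh$ in $\cV$ is. Reversing the roles of $\bh$ and $\bh'$ in the Sylvester computation produces the other mixed separation and does not recover $\gd$ either. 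What your argument actually proves is the bound with $\gd$ replaced by $\min\{|\lambda-\mu|:\lambda\in\sigma(\bh)\setminus\cV,\ \mu\in\sigma(\bh')\cap\cV\}$, which is the form in which the $\sin\Theta$ theorem is normally stated and is all that the paper requires downstream.
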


\subsection{Proof of Theorem \ref{thm_mis}} 
\label{sec_proof_thm}
The behavior of the eigenvalues of the limiting operator $\D$ can be stated as follows -
\begin{lemma}
Under our model, the eigenvalues of $\D$ - $|\mu_1(\D)| \geq |\mu_2(\D)| \geq\cdots \geq|\mu_n(\D)|$, can be bounded as follows -
\begin{align}
\label{eq_eigvl_rel}
\mu_1(\D) = O(n\gs_1),\ |\mu_K(\D)| = O(n(\gs_1-\gs_2)),\ \mu_{K+1}(\D) = \cdots = \mu_{n}(\D) = -\gs_1
\end{align}

Also, With high probability it holds that $|\mu_K(\bld/\log n)| = O(n(\gs_1 - \gs_2))$ and \\ 
$\mu_{K+1}(\bld/\log n) \leq o(n)$.
\end{lemma}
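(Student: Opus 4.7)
The plan is to exploit the block structure of $\D$. Let $Z \in \{0,1\}^{n \times K}$ be the community membership matrix with $Z_{ia} = \mathbf{1}(\V{c}(v_i) = a)$, so that $(ZZ^T)_{ij}$ is $1$ when $v_i$ and $v_j$ share a community and $Z\mathbf{1}_K = \mathbf{1}_n$. Reading the definition of $\D$ together with the claim $\mu_{K+1}(\D) = \cdots = -\gs_1$ forces the convention $\D_{ii}=0$, which gives the clean factorization
\begin{align*}
\D \;=\; (\gs_1 - \gs_2)\, ZZ^T + \gs_2\, \mathbf{1}_n\mathbf{1}_n^T - \gs_1\, I_n \;=\; Z A Z^T - \gs_1\, I_n,
\end{align*}
with $A = (\gs_1-\gs_2)\, I_K + \gs_2\, \mathbf{1}_K\mathbf{1}_K^T \in \R^{K\times K}$. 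The matrix $A$ has a simple eigenvalue $\gs_1 + (K-1)\gs_2$ (eigenvector $\mathbf{1}_K/\sqrt K$) and an eigenvalue $\gs_1-\gs_2$ of multiplicity $K-1$ on the orthogonal complement of $\mathbf{1}_K$.

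Next I would transfer this spectrum to $ZAZ^T$. Its nonzero eigenvalues coincide with those of $A^{1/2}(Z^TZ)A^{1/2}$, and $Z^TZ = \mbox{diag}(n_1,\ldots,n_K)$ with $n_a = n/K + O_P(\sqrt n)$ by standard multinomial concentration of $(c_1,\ldots,c_n)$. A finite-dimensional Weyl step comparing $A^{1/2}(Z^TZ)A^{1/2}$ to $(n/K)\, A$ then yields, with high probability, nonzero eigenvalues $(n/K)(\gs_1+(K-1)\gs_2) + O(\sqrt n)$ and $(n/K)(\gs_1-\gs_2) + O(\sqrt n)$ of multiplicity $K-1$, the remaining $n-K$ eigenvalues of $ZAZ^T$ being $0$. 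Subtracting $\gs_1 I_n$ returns the three stated levels of $\D$, and re-ordering by absolute value places the $K-1$ intermediate eigenvalues in positions $2,\dots,K$ because, under \eqref{eq_main_cond}, the gap $(n/K)|\gs_1-\gs_2|$ dominates both $\gs_1$ and the $O(\sqrt n)$ community-size fluctuation.

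For the high-probability statement about $\bld/\log n$, the plan is to combine the spectrum of $\D$ above with the Weyl/Mirsky inequality $\bigl||\mu_i(X)|-|\mu_i(Y)|\bigr| \leq \|X-Y\|_F$ applied to $X=\bld/\log n$ and $Y=\D$. Theorem~\ref{thm_geo_dis} already supplies $\|\bld/\log n - \D\|_F = o(n)$ with probability $1-o(1)$, whence $|\mu_K(\bld/\log n)| \leq |\mu_K(\D)| + o(n) = O(n(\gs_1-\gs_2))$ and $|\mu_{K+1}(\bld/\log n)| \leq |\mu_{K+1}(\D)| + o(n) = \gs_1 + o(n) = o(n)$. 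The main obstacle here is not the linear-algebra computation of the spectrum of $\D$ but rather keeping the ordering-by-absolute-value intact under the perturbation: one must verify that the spectral gap separating the top $K$ eigenvalues of $\D$ from the flat tail at $-\gs_1$ exceeds the Frobenius perturbation, since otherwise the indices $1,\ldots,K$ of $\bld/\log n$ need not correspond to the top-$K$ eigenspace of $\D$. This gap is $\Omega(n|\gs_1-\gs_2|)$, and under \eqref{eq_main_cond} it is precisely the quantity that will drive the Davis--Kahan bound (Theorem~\ref{thm_dk}) in the proof of Theorem~\ref{thm_mis}.
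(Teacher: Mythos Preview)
Your proposal is correct and follows essentially the same route as the paper: both recognize that $\D + \gs_1 I_n$ has rank $K$ with a $K\times K$ core matrix whose eigenvalues are $\gs_1+(K-1)\gs_2$ and $\gs_1-\gs_2$ (multiplicity $K-1$), then scale by the block sizes to obtain the top-$K$ spectrum of $\D$, and finally apply Weyl's inequality together with Theorem~\ref{thm_geo_dis} for the statement about $\bld/\log n$. Your version is somewhat more explicit than the paper's---you write the factorization $\D = ZAZ^T - \gs_1 I_n$ out, handle the multinomial fluctuation $n_a = n/K + O_P(\sqrt n)$ via a finite-dimensional Weyl step, and check the absolute-value ordering---but these are refinements of the same argument rather than a different one.
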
 
\begin{proof}
The matrix $\D + \gs_1\mbox{I}_{n\times n}$ is a block matrix with blocks of sizes $\{n_a\}_{a=1}^K$, with $\sum_{a=1}^K n_a = n$. The elements of $(a,b)$th block are all same and equal to $\gs_1$, if $a=b$ and equal to $\gs_2$, if $a\neq b$. Note, diagonal of $\D$ is zero, as diagonal of $\bld$ is also zero. Now, we have the eigenvalues of the $K\times K$ matrix of the values in $\D$ to be $(\gs_1+(K-1)\gs_2, \gs_1-\gs_2, \ldots, \gs_1 - \gs_2)$. If we consider, $\gl_2^2 = c\gl_1$, then, if $c > 1$, we will have $\gs_1 > \gs_2$. So, under our model, we have that $\gs_1 > \gs_2$. So, because of repetitions in the block matrix $\mu_1(\D) = O(n\gs_1) = O(n)$ and $\mu_K(\D) = O(n(\gs_1-\gs_2)) = O(n)$, since, by assumption (A3), $n_a = O(n)$, for all $a=1,\ldots, K$. Now, the rest of the eigenvalues of $\D + \gs_1\mbox{Id}_{n\times n}$ is zero, so the rest of eigenvalues of $\D$ is $-\gs_1$.

Now, about the second part of Lemma, By Weyl's Inequality, for all $i=1, \ldots, n$,
\begin{eqnarray*}
||\mu_i(\bld/\log n)| - |\gl_i(\D)|| & \leq & \left|\left|\bld/\log n - \D\right|\right|_F \leq o(n) 
\end{eqnarray*}
Since, from (A1)-(A3), it follows that $\gs_1 - \gs_2 > c >0$, for some constant $c$, so, $|\gl_K(\bld/\log n)| = O(n(\gs_1 - \gs_2)) - o(n) = O(n(\gs_1 - \gs_2))$ for large $n$ and $|\gl_{K+1}(\bld/\log n)| \leq -\gs_1 + o(n) = o(n)$.
\end{proof}
Now, let $\bw$ be the eigenspace corresponding to the top $K$ absolute eigenvalues of $\D$ and $\tilde{\bw}$ be the eigenspace corresponding to the top $K$ absolute eigenvalues of $\bld$. Using Davis-Kahan
\begin{lemma}
\label{lm_eigsp}
With high probability, there exists an orthogonal matrix $\br\in\R^{K\times K}$ such that $||\bw\br - \tilde{\bw}||_{F} \leq o\left((\gs_1 - \gs_2)^{-1}\right)$
\end{lemma}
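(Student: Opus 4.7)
The plan is to apply the Davis--Kahan theorem (Theorem~\ref{thm_dk}) with $\bh = \D$ and $\bh' = \bld/\log n$, and combine it with the two ingredients already in hand: the spectral structure of $\D$ from the preceding lemma and the Frobenius bound $\|\bld/\log n - \D\|_F \leq o(n)$ with high probability from Theorem~\ref{thm_geo_dis}.

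First I would fix an interval $\cV$ that isolates the top $K$ eigenvalues of $\D$ from all others. By the preceding lemma those top $K$ eigenvalues are strictly positive, with the largest of order $n\gs_1$ and the smallest of order $n(\gs_1 - \gs_2)$, while the remaining $n - K$ eigenvalues of $\D$ all equal $-\gs_1 = O(1)$. Because (A1)--(A3) guarantee $\gs_1 - \gs_2 > c > 0$ for some constant $c$, any choice such as $\cV = [\tfrac{1}{2} c n, \infty)$ contains exactly the top $K$ eigenvalues of $\D$. The Davis--Kahan gap is then
\begin{align*}
\gd \;\geq\; n(\gs_1 - \gs_2) + \gs_1 \;=\; \Omega\!\bigl(n(\gs_1 - \gs_2)\bigr).
\end{align*}

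Next I need to verify that the eigenspace $\tilde{\bw}$, defined as the top $K$ absolute eigenvectors of $\bld$, is exactly the eigenspace of $\bld/\log n$ associated with eigenvalues in $\cV$, so that Davis--Kahan pairs $\bw$ with the intended $\tilde{\bw}$. This is where Weyl's inequality enters: the perturbation has Frobenius (hence operator) norm $o(n)$, so the top $K$ eigenvalues of $\D$, each of size $\Omega(n)$, remain of the same order and positive for $\bld/\log n$, while the cluster of eigenvalues near $-\gs_1$ can shift by at most $o(n)$ and therefore cannot cross into $\cV$. After thickening $\cV$ by $o(n)$ if necessary, it contains exactly the top $K$ eigenvalues of both matrices, and in particular these are exactly the top $K$ in absolute value for $\bld/\log n$ as well.

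Plugging everything into Theorem~\ref{thm_dk} produces an orthogonal $\br \in \R^{K \times K}$ with
\begin{align*}
\|\bw \br - \tilde{\bw}\|_F \;\leq\; \sqrt{2}\,\frac{\|\bld/\log n - \D\|_F}{\gd} \;\leq\; \sqrt{2}\,\frac{o(n)}{\Omega\!\bigl(n(\gs_1 - \gs_2)\bigr)} \;=\; o\!\left((\gs_1 - \gs_2)^{-1}\right),
\end{align*}
which is exactly the claim. The only real subtlety is making sure that the top $K$ eigenvalues of $\bld/\log n$ in absolute value are the same $K$ that track the signal eigenvalues of $\D$; once the gap $\Omega(n(\gs_1-\gs_2))$ is shown to dominate the perturbation of size $o(n)$, Weyl's inequality rules out any eigenvalue crossings and the identification is automatic. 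No further work beyond a routine bookkeeping of constants is needed.
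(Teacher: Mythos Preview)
Your proposal is correct and follows essentially the same route as the paper: apply Davis--Kahan with the gap $\gd = \Omega(n(\gs_1-\gs_2))$ supplied by the preceding lemma and the Frobenius perturbation bound $o(n)$ from Theorem~\ref{thm_geo_dis}. If anything, you are a bit more explicit than the paper in invoking Weyl's inequality to confirm that the top $K$ eigenvalues of $\bld/\log n$ actually fall in the chosen interval $\cV$, which the paper states without elaboration.
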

\begin{proof}
The top $K$ eigenvalues of both $\D$ and $\bld/\log n$ lies in $(Cn, \infty)$ for some $C>0$. Also, the gap $\gd = O(n(\gs_1 - \gs_2))$ between top $K$ and $K+1$th eigenvalues of matrix $\D$. So, now, we can apply Davis-Kahan Theorem \ref{thm_dk} and Theorem \ref{thm_geo_dis}, to get that,
\begin{align*}
||\bw\br - \tilde{\bw}||_F \leq \sqrt{2}\frac{\left|\left|\bld/\log n - \D\right|\right|_F}{\gd} \leq \frac{o(n)}{O(n(\gs_1 - \gs_2))} = o\left((\gs_1 - \gs_2)^{-1}\right)
\end{align*} 
\end{proof}

Now, the relationship between the rows of $W$ can be specified as follows -
\begin{lemma}
\label{lm_rows}
For any two rows $i,j$ of $\bw_{n\times K}$ matrix, $||u_i - u_j||_2 \geq O(1/\sqrt{n})$, if type of $v_i\neq $ type of $v_j$.
\end{lemma}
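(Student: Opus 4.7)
The plan is to exploit the exact block-constant structure of $\D$ to pin down the top-$K$ eigenspace rigidly, and then read off the row separation by hand. The key observation is that
\begin{align*}
\D+\gs_1 I_{n\times n} \;=\; V R V^T,
\end{align*}
where $V\in\{0,1\}^{n\times K}$ is the community indicator matrix ($V_{ia}=\mathbf{1}(c(v_i)=a)$) and $R=(\gs_1-\gs_2)I_K+\gs_2\,\mathbf{1}\mathbf{1}^T$. Condition (A3) implies $\gs_1>\gs_2>0$, so $R$ is nonsingular and therefore $\D+\gs_1 I$ has rank exactly $K$ with column space equal to $\mathrm{range}(V)$. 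Combined with the eigenvalue list in the preceding lemma --- the remaining $n-K$ eigenvectors of $\D$ carry eigenvalue $-\gs_1$ and lie in $\mathrm{range}(V)^\perp$ --- this pins the columns of $\bw$ down to span precisely $\mathrm{range}(V)$.

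Next, write $\bw=VN$ for the unique $K\times K$ matrix $N$ and plug this into the orthonormality identity $\bw^T\bw=I_K$, which collapses to $N^T\mathrm{diag}(n_1,\ldots,n_K)N=I_K$, forcing $N=\mathrm{diag}(n_a)^{-1/2}\,O$ for some orthogonal $O\in\R^{K\times K}$. The consequence is that the $i$-th row of $\bw$ depends only on the block of $v_i$: if $c(v_i)=a$ then $u_i=N[a,:]=n_a^{-1/2}\,O[a,:]$.

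The remainder is a one-line gap computation. For $v_i,v_j$ of distinct types $a\neq b$, the rows $O[a,:]$ and $O[b,:]$ of the orthogonal matrix $O$ are orthonormal, so the cross term vanishes and
\begin{align*}
\|u_i-u_j\|_2^2 \;=\; \tfrac{1}{n_a}\|O[a,:]\|^2+\tfrac{1}{n_b}\|O[b,:]\|^2 \;=\; \tfrac{1}{n_a}+\tfrac{1}{n_b}.
\end{align*}
Under the balanced model one has $n_a,n_b=n/K + O_P(\sqrt n)$, which yields $\|u_i-u_j\|_2\geq \sqrt{2K/n}\,(1-o_P(1)) = \Omega(n^{-1/2})$.

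The main obstacle is essentially conceptual rather than technical: the argument is driven entirely by the rigidity of the block-constant structure and collapses into pure linear algebra once the column space of $\bw$ is identified with $\mathrm{range}(V)$. The one thing to verify carefully is $\gs_1\neq \gs_2$ so that $R$ is invertible, which is exactly what is guaranteed by (A3) (equivalently $\gl_2^2>\gl_1$) as recorded in the discussion just before this subsection; the randomness of $n_a$ is a minor bookkeeping point handled by standard binomial concentration.
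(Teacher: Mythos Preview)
Your argument is correct and follows essentially the same route as the paper's proof: both exploit that $\D+\gs_1 I$ is block-constant to identify the top-$K$ eigenspace with the span of the community indicators, then read off the $O(n^{-1/2})$ row separation. Your version is considerably more explicit---the factorization $\D+\gs_1 I=VRV^T$ and the resulting exact identity $\|u_i-u_j\|_2^2=n_a^{-1}+n_b^{-1}$ sharpen what the paper sketches informally---but the underlying idea is the same.
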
 
\begin{proof}
The matrix $\D + \gs_1\mbox{Id}_{n\times n}$ is a block matrix with blocks of sizes $\{n_a\}_{a=1}^K$, with $\sum_{a=1}^K n_a = n$. The elements of $(a,b)$th block are all same and equal to $\gs_1$, if $a=b$ and equal to $\gs_2$, if $a\neq b$. Note, diagonal of $\D$ is zero, as diagonal of $\bld$ is also zero. Now, we have the rows of eigenvectors of the $K\times K$ matrix of the values in $\D$ that have a constant difference. Under our model, we have that $\gs_1 > \gs_2$. So, because of repetitions in the block matrix, rows of $\D$ as well as the projection of $\D$ into into its top $K$ eigenspace has difference of order $O(n^{-1/2})$ between rows of matrix. 
\end{proof}

Now, if we consider $K$-means criterion as the clustering criterion on $\tilde{\bw}$, then, for the $K$-means minimizer centroid matrix $\bc$ is an $n\times K$ matrix with $K$ distinct rows corresponding to the $K$ centroids of $K$-means algorithm. By property of $K$-means objective function and Lemma \ref{lm_eigsp}, with high probability,
\begin{eqnarray*}
||\bc - \tilde{\bw}||_F & \leq & ||\bw\br - \tilde{\bw}||_F\\
||\bc - \bw\br||_F & \leq & ||\bc - \tilde{\bw}||_F + ||\bw\br - \tilde{\bw}||_F \\
||\bc - \bw\br||^2_F & \leq & 4||\bw\br - \tilde{\bw}||^2_F \\
 & \leq & o\left((\gs_1 - \gs_2)^{-2}\right) \\
\end{eqnarray*}

By Lemma \ref{lm_rows}, for large $n$, we can get constant $C$, such that, $K$ balls, $B_1, \ldots, B_K$, of radius $r = Cn^{-1/2}$ around $K$ distinct rows of $\bw$ are disjoint.

Now note that with high probability the number of rows $i$ such that $||\bc_i - (\bw\br)_i|| > r$ is at most $\frac{cn}{(\gs_1 - \gs_2)^2}$, with arbitrarily small constant $c > 0$. If the statement does not hold then,
\begin{eqnarray*}
||\bc-\bw\br||^2_F & > & r^2.\left(\frac{cn}{(\gs_1 - \gs_2)^2}\right) \\
 & \geq & Cn^{-1}.\left(\frac{cn}{(\gs_1 - \gs_2)^2}\right) = O\left((\gs_1 - \gs_2)^{-2}\right)
\end{eqnarray*}
So, we get a contradiction, since $||\bc - \bw\br||^2_F \leq o\left((\gs_1 - \gs_2)^{-2}\right)$. Thus, the number of mistakes should be at most $\left(\frac{cn}{(\gs_1 - \gs_2)^2}\right)$, with arbitrarily small constant $c > 0$. 

So, for each $v_i \in V(G_n)$, if $\V{c}(v_i)$ is the type of $v_i$ and $\hat{\V{c}}(v_i)$ is the type of $v_i$ as estimated from applying $K$-means on top $K$ eigenspace of geodesic matrix $\bld$, we get that for arbitrarily small constant, $c > 0$, 
\begin{align*}
\left[\frac{1}{n}\sum_{i=1}^n\mathbf{1}\left(\V{c}(v_i) \neq \hat{\V{c}}(v_i)\right) < \frac{c}{(\gs_1 - \gs_2)^2}\right] \rar 1
\end{align*}
So, for constant $\gs_1$ and $\gs_2$, we get $c > 0$ such that, 
\begin{align*}
\left[\frac{1}{n}\sum_{i=1}^n\mathbf{1}\left(\V{c}(v_i) \neq \hat{\V{c}}(v_i)\right) < \frac{1}{2}\right] \rar 1
\end{align*}

\section{Conclusion}
\label{conclusion}
We have given an overview of spectral clustering in the context of community detection of networks and clustering. We have also introduced a new method of community detection in the paper and we have shown bounds on theoretical performance of the method. 

%
%
%

\bibliographystyle{spmpsci}
\bibliography{network_geodesic_oslo}

\end{document}